\documentclass[a4paper,oneside,9pt]{article}%
\usepackage{makeidx}
\usepackage[english]{babel}
\usepackage{amsmath}
\usepackage{amsfonts}
\usepackage{amssymb}
\usepackage{stmaryrd}
\usepackage{graphicx}
\usepackage{mathrsfs}
\usepackage[colorlinks,linkcolor=red,anchorcolor=blue,citecolor=blue,urlcolor=blue]{hyperref}
\usepackage[symbol*,ragged]{footmisc}
\usepackage{bbm}
\usepackage{CJK}
\usepackage{indentfirst}
\usepackage{cases}

\allowdisplaybreaks[4]
\providecommand{\U}[1]{\protect\rule{.1in}{.1in}}
\providecommand{\U}[1]{\protect \rule{.1in}{.1in}}

\pagenumbering{arabic}
\setlength{\textwidth}{145mm}
\setlength{\textheight}{225mm}
\headsep=20pt \topmargin=-5mm \oddsidemargin=0.46cm
\evensidemargin=0.46cm \raggedbottom
\newtheorem{theorem}{Theorem}[section]

\newtheorem{corollary}[theorem]{Corollary}

\newtheorem{lemma}[theorem]{Lemma}

\newtheorem{remark}[theorem]{Remark}

\newenvironment{proof}[1][Proof]{\noindent \textbf{#1.} }{\  \rule{0.5em}{0.5em}}
\numberwithin{equation}{section}

\begin{document}

\title{On a multiplicative hybrid problem over almost-primes }

\author{Yuetong Zhao \,\,\,\,\&\,\,\,\,Wenguang Zhai
                    \vspace*{-4mm} \\
                    $\textrm{\small School of Mathematical Sciences, Beihang University}$
                     \vspace*{-4mm} \\
     \small  Beijing 100191, P. R. China
                     \vspace*{-4mm}  \\
     $\textrm{\small Department of Mathematics, China University of Mining and Technology}$
                    \vspace*{-4mm} \\
     \small  Beijing 100083, P. R. China
                      \vspace*{-4mm}\\
     }

\footnotetext
   {*Yuetong Zhao is the corresponding author.\\
     \textit{ E-mail addresses}:
     \href{mailto:yuetong.zhao.math@gmail.com}{yuetong.zhao.math@gmail.com} (Y. T. Zhao),
     \href{mailto:zhaiwg@hotmail.com}{zhaiwg@hotmail.com} (W. G. Zhai).\\
      }

\date{}
\maketitle

{\textbf{Abstract}}: Let $N$ be a large enough natural number, $\mathfrak{A}$ and $\mathfrak{B}$ be subsets of $\{N+1, \cdots , 2N\}$. In this paper, we prove that there exists integers $a, b$ with $a\in\mathfrak{A}$, $b\in\mathfrak{B}$ such that
\begin{equation*}
ab=P_k^2 + O(P_k^{1-\delta}),
\end{equation*}
where $0<\delta<\frac{1}{2}$ and $P_k$ denotes an almost-prime with at most $k$ prime factors, counted with multiplicity.

{\textbf{Keywords}}: Linear sieve; almost-prime; exponential sum; multiplicative hybrid problem

{\textbf{MR(2020) Subject Classification}}: 11N36, 11L07

\section{Introduction and main result}
Assume that $N$ is a sufficiently large integer, let $\mathfrak{S}=\mathfrak{S}(N)=\{N+1,\dots,2N\}$.  It is well-known that the sequence $(\sqrt{n})$, $n=1,2,\dots$, is uniformly distributed modulo one, which implies that the quantity $\min\limits_{n\in\mathfrak{S},c\in\mathbb{Z}}|\sqrt{n}-c|=o(1)$. More precisely, we can calculate that $\min\limits_{n\in\mathfrak{S},c\in\mathbb{Z}}|\sqrt{n}-c|\ll N^{-1/2}$. Now suppose $\mathfrak{A}$ is a random subset of $\mathfrak{S}$ with $|\mathfrak{A}|\gg N$, we want to know how large is the quantity $\min\limits_{n\in\mathfrak{A},c\in\mathbb{Z}}|\sqrt{n}-c|$? Is it possible that $\min\limits_{n\in\mathfrak{A},c\in\mathbb{Z}}|\sqrt{n}-c|=o(1)$? It is easy to prove that we can choose a subset $\mathfrak{A}$ of $\mathfrak{S}$ with $|\mathfrak{A}|\gg N$ such that $\min\limits_{n\in\mathfrak{A},c\in\mathbb{Z}}|\sqrt{n}-c|\gg 1$. This fact implies that there is no integer $n\in \mathfrak{A}$ such that $n$ is ``near a square".

However, the situation maybe different when there are two random subsets in $\mathfrak{S}$. In 1987, Iwaniec and S$\acute{a}$rk\"ozy \cite{IS} studied a multiplicative hybrid problem.
Let $\mathfrak{A},\mathfrak{B}$ be subsets of $\mathfrak{S}$ and $|\mathfrak{A}|\gg N$, $|\mathfrak{B}|\gg N$. They proved that there exist $a\in\mathfrak{A}$, $b\in\mathfrak{B}$ and an integer $c$ such that
\begin{equation}\label{ab}
  ab=c^2+O(c^{\frac{1}{2}}\log^{\frac{1}{2}}c),
\end{equation}
which implies that
\begin{equation*}
\min_{\substack{a\in \mathfrak{A},\,b\in\mathfrak{B}\\c\in\mathbb{Z}}}|\sqrt{ab}-c|\ll N^{-1/2}\log^{1/2}N,
\end{equation*}
 namely, we can find $a\in\mathfrak{A}$, $b\in\mathfrak{B}$ such that $ab$ is very ``near a square".

Subsequently, in 1995, Zhai \cite{Zhai} generalized this result to $k$ subsets of $\mathfrak{S}$. Assume that $G_1,\dots,G_k$ are $k$ subsets of $\mathfrak{S}$ and $|G_1|\gg N,\dots,|G_k|\gg N$. He proved that there exist integers $n_1,\dots, n_k$, $b$ with $n_1\in G_1,\dots, n_k\in G_k$ such that
\begin{equation}\label{k}
  n_1\cdots n_k=b^k+O(b^{k-3/2}\log^{(k-2)/2}b),\qquad k\geq 3.
\end{equation}
After that, Bordell$\grave{e}$s \cite{B} proved that there exist integers $n_1,\dots, n_k$, $b$ with $n_1\in G_1,\dots, n_k\in G_k$ such that
\begin{equation*}
  n_1\cdots n_k=b^k+O(b^{k-5/3+r(k)}), \qquad k\geq 2,
\end{equation*}
where $r(k)=2(9k+7)/(3(9k^2-3k+10))$. This result improved on (\ref{k}) for $k\geq 5$.

 According to (\ref{ab}), it is natural to conjecture that there exist integers $a$, $b$ with $a\in\mathfrak{A}$, $b\in\mathfrak{B}$ such that
\begin{equation*}
  ab=p^2+O(p^\theta),
\end{equation*}
where $p$ is prime and $\frac{1}{2}\leq \theta< 1$. In this paper, we obtain an approximate form of our conjecture.

For two finite random subsets $\mathfrak{A},\mathfrak{B}$ of  $\mathfrak{S}$ and a real number $\Delta$ with $0<\Delta\leq\frac{1}{2}$, we define
\begin{equation}\label{H1}
  H(\mathfrak{A},\mathfrak{B};\Delta)=\big|\big\{a\in\mathfrak{A},\, b\in\mathfrak{B} : \big\|\sqrt{ab}\big\|<\Delta \big\}\big|
\end{equation}
and
\begin{equation}\label{A}
  \mathscr{A}=\big\{l: l=l\big(\sqrt{ab}\big),\, a\in\mathfrak{A},\, b\in\mathfrak{B},\, \big\|\sqrt{ab}\big\|<\Delta\big\},
\end{equation}
where $l(t)$ denotes the function of the closest integer to $t$ and $\|t\|=\min\big\{\{t\}, 1-\{t\}\big\}$. We write
\begin{equation*}
  H (\mathscr{A};k)=|\{l\in \mathscr{A}: \Omega(l)\leq k\}|,
\end{equation*}
where $\Omega(l)$ denotes the number of prime factors of $l$, counted according to multiplicity. Now, we state our main results in the following.

\begin{theorem}\label{thm3}
Let $\mathfrak{A},\mathfrak{B}\subset\{N+1, \cdots, 2N\}$ and $0<\Delta\leq 1/2$. Moreover, we suppose that $\varepsilon>0$ is sufficiently small and $|\mathfrak{A}||\mathfrak{B}|\gg N^{4/3+\varepsilon}$, then we have
 \begin{equation*}
   H(\mathfrak{A},\mathfrak{B};\Delta)=2\Delta |\mathfrak{A}||\mathfrak{B}|+O\left(N(|\mathfrak{A}||\mathfrak{B}|)^{\frac{1}{4}}\log^{\frac{3}{2}}N\right).
 \end{equation*}
\end{theorem}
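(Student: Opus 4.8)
The plan is to detect the condition $\|\sqrt{ab}\|<\Delta$ by a truncated Fourier expansion and reduce the whole problem to a single bilinear exponential sum. Writing $e(t)=e^{2\pi i t}$ and letting $\chi$ denote the $1$-periodic indicator of $(-\Delta,\Delta)$, so that $H(\mathfrak A,\mathfrak B;\Delta)=\sum_{a\in\mathfrak A}\sum_{b\in\mathfrak B}\chi(\sqrt{ab})$, I would first interpose Beurling--Selberg (Vaaler) trigonometric majorant and minorant polynomials $S^{\pm}$ of degree $H$ with $S^{-}\le\chi\le S^{+}$, $\widehat{S^{\pm}}(0)=2\Delta+O(1/H)$, Fourier support in $|h|\le H$, and $|\widehat{S^{\pm}}(h)|\ll 1/|h|$. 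Inserting these and using $\sqrt{ab}=\sqrt a\,\sqrt b$ yields
\[
H(\mathfrak A,\mathfrak B;\Delta)=2\Delta|\mathfrak A||\mathfrak B|+O\!\Bigl(\tfrac{|\mathfrak A||\mathfrak B|}{H}\Bigr)+O\!\Bigl(\sum_{1\le h\le H}\tfrac{1}{h}\,|T(h)|\Bigr),\qquad T(h)=\sum_{a\in\mathfrak A}\sum_{b\in\mathfrak B}e\bigl(h\sqrt a\,\sqrt b\bigr).
\]
Everything then comes down to a good, uniform bound for $T(h)$, followed by choosing $H$ to balance the truncation term $|\mathfrak A||\mathfrak B|/H$ against the exponential-sum contribution.

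The main obstacle lies exactly here. The two-dimensional phase $h\sqrt{ab}$ is constant along each hyperbola $ab=\text{const}$, so its Hessian vanishes identically and ordinary two-dimensional van der Corput/stationary-phase arguments degenerate; one must instead exploit the multiplicative (bilinear) structure directly. Treating $T(h)$ as the bilinear form $\sum_{a,b}e\bigl((\sqrt a)(h\sqrt b)\bigr)$ and using that the points $\sqrt a$ ($a\in\mathfrak A$) lie in an interval of length $\asymp N^{1/2}$ and are $\asymp N^{-1/2}$-separated, while the $h\sqrt b$ lie in an interval of length $\asymp hN^{1/2}$, the spacing conditions of the double large sieve collapse onto the diagonal and give $|T(h)|^{2}\ll(1+hN)\,|\mathfrak A||\mathfrak B|$; the same order follows from a Cauchy--Schwarz in $a$ together with the derivative tests applied to the one-dimensional sums $\sum_{a}e\bigl(h\sqrt a(\sqrt{b_1}-\sqrt{b_2})\bigr)$, summed over $b_1,b_2\in\mathfrak B$ via the fact that each gap $|b_1-b_2|$ is realised $\ll|\mathfrak B|$ times.

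However, a term-by-term bound of this strength is too lossy for dense sets, and the \emph{shape} of the target error, $N(|\mathfrak A||\mathfrak B|)^{1/4}=\bigl(N^{2}(|\mathfrak A||\mathfrak B|)^{1/2}\bigr)^{1/2}$, signals that the decisive step must be an $L^{2}$-average over $h$ rather than a pointwise estimate. Thus I would apply Cauchy--Schwarz in $h$,
\[
\sum_{1\le h\le H}\frac{|T(h)|}{h}\ll(\log H)^{1/2}\Bigl(\sum_{1\le h\le H}\frac{|T(h)|^{2}}{h}\Bigr)^{1/2},
\]
open the square, and carry out the $h$-summation through $\sum_{h\le H}e(h\theta)\ll\min(H,\|\theta\|^{-1})$. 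This reduces the mean value to a weighted count of quadruples $(a_1,b_1,a_2,b_2)$ according to the distance of $\sqrt{a_1b_1}$ and $\sqrt{a_2b_2}$ modulo one. The diagonal $a_1b_1=a_2b_2$ contributes the multiplicative energy $\sum_{m}r(m)^2$ (with $r(m)=\#\{ab=m\}$), which is kept under control by the divisor bound $r(m)\le d(m)\ll N^{o(1)}$, hence $\sum_m r(m)^2\ll|\mathfrak A||\mathfrak B|\log N$; the off-diagonal is governed by the equidistribution of the values $\sqrt{ab}\bmod 1$ and is handled, after a dyadic split in $\|\theta\|$, by another van der Corput estimate. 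Together these produce the bound $\sum_{h\le H}|T(h)|^2/h\ll N^{2}(|\mathfrak A||\mathfrak B|)^{1/2}\log^{2}N$.

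Finally I would take $H$ of size about $(|\mathfrak A||\mathfrak B|)^{3/4}/N$, which makes the truncation term $|\mathfrak A||\mathfrak B|/H$ equal to $N(|\mathfrak A||\mathfrak B|)^{1/4}$ and matches it against the exponential-sum contribution $N(|\mathfrak A||\mathfrak B|)^{1/4}\log N$ coming from the previous display; the surviving logarithmic factors (one power from $\sum 1/h$ and a further half-power from the mean-value step) assemble into the stated $\log^{3/2}N$. The hypothesis $|\mathfrak A||\mathfrak B|\gg N^{4/3+\varepsilon}$ guarantees $H\ge1$ and, crucially, forces this error to be of smaller order than the main term $2\Delta|\mathfrak A||\mathfrak B|$. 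I expect the genuinely delicate part of the argument to be the uniform control of $T(h)$ for large $h$ through the off-diagonal count above: the degeneracy of the phase rules out naive stationary phase, and for clustered or multiplicatively structured $\mathfrak A,\mathfrak B$ the spacing of $\sqrt{ab}\bmod 1$ must be estimated with care, which is where the bulk of the technical work will concentrate.
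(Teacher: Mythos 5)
Your setup is the same as the paper's: detect $\|\sqrt{ab}\|<\Delta$ by a truncated Fourier expansion (the paper uses the Rivat--S\'ark\"ozy form of Vaaler's lemma for $\psi$, equivalent to your Beurling--Selberg polynomials), reduce to the sums $T(h)=\sum_{a}\sum_{b}e(h\sqrt{ab})$ weighted by $1/h$, and balance against the truncation term with $H\asymp (|\mathfrak{A}||\mathfrak{B}|)^{3/4}/N$. You also correctly diagnose that a pointwise bound $|T(h)|^2\ll(1+hN)|\mathfrak{A}||\mathfrak{B}|$ is too weak and that some averaging over $h$ is indispensable. The gap is in how you propose to perform that averaging. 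Applying Cauchy--Schwarz in $h$, opening $|T(h)|^2$ into a sum over quadruples $(a_1,b_1,a_2,b_2)$, and estimating the $h$-sum by $\min(H_0,\|\theta\|^{-1})$ with $\theta=\sqrt{a_1b_1}-\sqrt{a_2b_2}$ cannot yield your claimed bound $\sum_{h\le H}|T(h)|^2/h\ll N^2(|\mathfrak{A}||\mathfrak{B}|)^{1/2}\log^2N$: once absolute values are taken, the generic off-diagonal quadruples (those with $\|\theta\|\asymp1$) alone contribute $\gg(|\mathfrak{A}||\mathfrak{B}|)^2\log H_0/H_0$ to a dyadic block, which for $|\mathfrak{A}||\mathfrak{B}|\asymp N^2$ and $H_0\le H\asymp N^{1/2}$ is of size $N^{7/2}\log N$, far exceeding the target $N^3\log^2N$. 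Moreover, your appeal to ``the equidistribution of $\sqrt{ab}\bmod 1$'' and ``another van der Corput estimate'' for the off-diagonal is circular and unavailable: $\mathfrak{A}$ and $\mathfrak{B}$ are arbitrary subsets, so there are no complete ranges of summation to which a derivative test could be applied, and equidistribution of $\sqrt{ab}$ over $\mathfrak{A}\times\mathfrak{B}$ is essentially the statement being proved.

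The device you are missing is to keep $h$ \emph{inside} the bilinear form rather than averaging $|T(h)|$ after the fact. The paper writes the dyadic block as $\mathscr{L}=\sum_{h\sim H_0}g(h)\sum_a\sum_b e\bigl((h\sqrt{a})(\sqrt{b})\bigr)$ with $|g(h)|\le1$ and applies the Fouvry--Iwaniec double large sieve (Lemma \ref{lem1}) to the sequences $\mathscr{X}_0=\{h\sqrt{a}\}$ and $\mathscr{Y}_0=\{\sqrt{b}\}$. The decisive input is then the spacing count for the \emph{coupled} pairs: Lemma \ref{lem2} shows that the number of $(h_1,a_1,h_2,a_2)$ with $|h_1\sqrt{a_1}-h_2\sqrt{a_2}|\le N^{-1/2}$ is only $\ll H_0N\log N$, while Lemma \ref{lem2'} bounds the $b$-spacing sum by $\ll|\mathfrak{B}|$; this gives $\mathscr{L}\ll H_0N|\mathfrak{B}|^{1/2}\log^{1/2}N$, and the exponent $(|\mathfrak{A}||\mathfrak{B}|)^{1/4}$ comes from running the argument with the roles of $\mathfrak{A}$ and $\mathfrak{B}$ exchanged and taking the geometric mean of the two bounds --- a symmetrization step absent from your outline. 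The linearity in $H_0$ of the resulting bound is exactly what makes the weight $1/h$ absorb the $h$-average at the cost of a single logarithm. Without this coupling of $h$ to one of the multiplicative variables, the error term $N(|\mathfrak{A}||\mathfrak{B}|)^{1/4}\log^{3/2}N$ is out of reach by your route.
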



\begin{theorem}\label{thm1}
  Suppose that $\varepsilon>0$ is sufficiently small, $\mathfrak{A},\mathfrak{B}\subset\{N+1, \cdots, 2N\}$, $|\mathfrak{A}|\asymp N^{\eta}$, $|\mathfrak{B}|\asymp N^{\beta}$, $0<\eta\leq1$, $0<\beta\leq1$ and $\eta+\beta\geq 4(1+\delta)/3+\varepsilon$. Let $\Delta=N^{-\delta}$, $0<\delta<\frac{1}{2}$ and
  \begin{equation*}
  k=\left\lfloor\frac{2}{(\eta+\beta)/2-2/3-2\delta/3}\right\rfloor.
  \end{equation*}
  Then we have
  \begin{equation*}
    H(\mathscr{A};k)\geq C(\eta,\beta,\delta)\frac{\Delta|\mathfrak{A}||\mathfrak{B}|}{\log N}(1+o(1)),
  \end{equation*}
  where $C(\eta,\beta,\delta)>0$ is a constant depends on $\eta$, $\beta$, $\delta$. 
\end{theorem}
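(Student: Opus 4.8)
The plan is to turn the estimation of $H(\mathscr{A};k)$ into a linear sieve problem for the integer sequence $\mathcal{A}=\big(r(l)\big)_{N<l\le 2N}$, where
\begin{equation*}
r(l)=\big|\{(a,b)\in\mathfrak{A}\times\mathfrak{B}:|\sqrt{ab}-l|<\Delta\}\big|
\end{equation*}
is the number of representations of $l$ as the nearest integer to some $\sqrt{ab}$ with $\|\sqrt{ab}\|<\Delta$. Counting each value with this multiplicity, $H(\mathscr{A};k)=\sum_{\Omega(l)\le k}r(l)$, so it suffices to bound this weighted count from below. Theorem \ref{thm3} supplies the total mass: with $X:=2\Delta|\mathfrak{A}||\mathfrak{B}|$ one has $\sum_{l}r(l)=X\big(1+o(1)\big)$. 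For the sieve I need the congruence sums $S_d=\sum_{d\mid l}r(l)$. Writing $l=dm$, the condition $|\sqrt{ab}-dm|<\Delta$ is $\|\sqrt{ab}/d\|<\Delta/d$, so $S_d$ counts pairs with $ab$ near the square of a multiple of $d$. A measure count gives the main term $\tfrac{2\Delta}{d}|\mathfrak{A}||\mathfrak{B}|=X/d$, so the density is the multiplicative function $g(d)=1/d$; since $\sum_{p<z}\tfrac{\log p}{p}\sim\log z$, this is a linear (dimension one) sieve problem.

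The core analytic input is the level of distribution. Expanding the indicator of $\|\sqrt{ab}/d\|<\Delta/d$ into a Fourier series truncated at height $H$ (following Vaaler) reduces $S_d=X/d+R_d$ to the exponential sums $T(h/d)=\sum_{a\in\mathfrak{A},\,b\in\mathfrak{B}}e^{2\pi i(h/d)\sqrt{ab}}$ weighted by coefficients $c_h\ll\min(\Delta/d,1/|h|)$, plus a tail of size $O\big(X/(dH)\big)$. The case $d=1$ is exactly Theorem \ref{thm3}, whose error $O\big(N(|\mathfrak{A}||\mathfrak{B}|)^{1/4}\log^{3/2}N\big)$ is $o(X)$ precisely under the hypothesis $\eta+\beta\ge 4(1+\delta)/3+\varepsilon$. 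I would estimate $T(h/d)$ uniformly in $d$ by Cauchy--Schwarz in $a$ followed by van der Corput on the inner sum over $b$, then sum the bound against $\sum_{d\le D}\sum_h|c_h|$ and optimise $H$. The outcome is that $\sum_{d\le D}|R_d|\ll X\log^{-A}N$ holds with level of distribution $D=N^{\lambda}$, where $\lambda=(\eta+\beta)/2-2/3-2\delta/3$ is positive by hypothesis.

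With the level of distribution in hand I would apply the lower-bound linear sieve (Rosser--Iwaniec / Jurkat--Richert). Fix any $s$ with $2<s<(k+1)\lambda$ — this interval is nonempty precisely because $k=\lfloor 2/\lambda\rfloor$ forces $(k+1)\lambda>2$ — and set $z=D^{1/s}=N^{\lambda/s}$, so that the linear sieve function $f(s)$ is a fixed positive constant. The sieve then yields
\begin{equation*}
\sum_{\substack{l\,:\,p\mid l\,\Rightarrow\,p\ge z}} r(l)=S(\mathcal{A},z)\ge\big(f(s)+o(1)\big)\,X\prod_{p<z}\Big(1-\tfrac1p\Big)\gg\frac{\Delta|\mathfrak{A}||\mathfrak{B}|}{\log N}.
\end{equation*}
Every $l$ counted here has all prime factors $\ge z$, and as $l\le 2N$ this forces $\Omega(l)\le\log(2N)/\log z=(s/\lambda)(1+o(1))<k+1$, hence $\Omega(l)\le k$ for $N$ large. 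Since all such $l$ lie in $\mathscr{A}$, the displayed quantity is a lower bound for $H(\mathscr{A};k)$, with the positive constant $C(\eta,\beta,\delta)$ produced by $f(s)$ and Mertens' theorem.

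The main obstacle is the uniform-in-$d$ estimation of $T(h/d)$ and its summation over $d\le D$. The sieve modulus $d$ and the Fourier frequency $h$ enter only through the single ratio $h/d$, and the small values of $h/d$ — where the sum exhibits little cancellation and the remainder is largest — are what ultimately cap the level of distribution at $N^{\lambda}$ rather than the larger exponent a crude $d$-uniform bound would suggest. Pinning the admissible level to exactly $\lambda=(\eta+\beta)/2-2/3-2\delta/3$, and hence the number of prime factors to $k=\lfloor 2/\lambda\rfloor$, is the delicate part; once this is secured, the passage through the linear sieve is routine.
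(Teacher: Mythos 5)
Your overall architecture coincides with the paper's: treat $\mathscr{A}$ as a multiset with multiplicities $r(l)$, note $\omega(d)=1$ so the problem is a linear sieve with $X=2\Delta|\mathfrak{A}||\mathfrak{B}|$, establish remainders $r(\mathscr{A},d)\ll X/(dN^{\varepsilon})$ up to level $N^{\lambda}$ with $\lambda=(\eta+\beta)/2-2/3-2\delta/3$, and then run the Jurkat--Richert lower bound with $z$ a power of $N$ chosen so that every surviving $l\le 2N$ has $\Omega(l)\le k$; your observation that $k=\lfloor 2/\lambda\rfloor$ forces $(k+1)\lambda>2$ is exactly the paper's condition $\alpha(k+1)(\eta+\beta-\delta)>2$, and your endgame is sound.

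The gap is in the level of distribution, which is the entire analytic content of the theorem (the paper's Lemma \ref{lem0}) and which you assert rather than prove. Worse, the method you sketch for it --- Cauchy--Schwarz in $a$ followed by van der Corput on the inner sum over $b$ --- cannot work as stated: $\mathfrak{B}$ is an \emph{arbitrary} subset of $\{N+1,\dots,2N\}$ carrying only its characteristic function, so derivative tests (Kusmin--Landau, van der Corput) are not applicable to the $b$-sum; they require summation over consecutive integers. Even the charitable reinterpretation (Cauchy--Schwarz to complete one variable, expand the square, apply a derivative test to the completed variable, and count near-diagonal pairs by a spacing lemma) treats $h$ and $a$ separately, whereas the exponent $\lambda$ you need is obtained in the paper by applying the Fouvry--Iwaniec double large sieve for bilinear forms (Lemma \ref{lem1}) to the \emph{combined} sequence $\mathscr{X}_1=\{h\sqrt{a}/d\}$ against $\mathscr{Y}_1=\{\sqrt{b}\}$, with the two spacing counts supplied by Lemma \ref{lem2} (quadruples $|h_1/h_2-\sqrt{a_2/a_1}|\le d/(NH_1)$) and Lemma \ref{lem2'} (pairs $|\sqrt{b_1}-\sqrt{b_2}|\le d/(H_1\sqrt{N})$), then symmetrizing in $\mathfrak{A}$ and $\mathfrak{B}$ to get the factor $(|\mathfrak{A}||\mathfrak{B}|)^{1/4}$. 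You would need to either carry out this double-large-sieve estimate (including the uniformity in $d$ coming from the choice $H=d\Delta^{-1}N^{2\varepsilon}$, which makes the truncation error $|\mathfrak{A}||\mathfrak{B}|/H$ acceptable) or verify quantitatively that your alternative delivers the same exponent; as written, the claim that $\sum_{d\le D}|R_d|\ll X\log^{-A}N$ holds at $D=N^{\lambda}$ is exactly the statement to be proved.
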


As an application of Theorem \ref{thm1} we get the following:
\begin{corollary}\label{1.3}
 Suppose that $\varepsilon>0$ is sufficiently small, $\mathfrak{A},\mathfrak{B}\subset\{N+1, \cdots, 2N\}$, $|\mathfrak{A}|\asymp N^{\eta}$, $|\mathfrak{B}|\asymp N^{\beta}$, $0<\eta\leq1$, $0<\beta\leq1$ and $\eta+\beta\geq 4(1+\delta)/3+\varepsilon$. Then there exist integers $a$, $b$ with $a\in \mathfrak{A}$, $b\in \mathfrak{B}$ and an integer having at most $k$ prime factors such that
\begin{equation}\label{Pk}
ab=P_k^2+O(P_k^{1-\delta}),
\end{equation}
where $0<\delta<\frac{1}{2}$ and
\begin{equation*}
k=\left\lfloor\frac{2}{(\eta+\beta)/2-2/3-2\delta/3}\right\rfloor.
\end{equation*}
Particularly, if the value of $\eta$ and $\beta$ are close to 1, then the formula (\ref{Pk}) holds for $0<\delta<1/14$ and $k=6$, which is the minimum value of $k$. In general cases, for different $k,\eta,\beta$, we can find the range of $\delta$:
\begin{equation*}
3(\eta+\beta)/4-1-3/k\leq\delta<3(\eta+\beta)/4-1-3/(k+1).
\end{equation*}

\end{corollary}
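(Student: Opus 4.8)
The plan is to deduce the corollary directly from Theorem~\ref{thm1} by passing from the lower bound on the counting function $H(\mathscr{A};k)$ to the mere existence of one admissible almost-prime, and then converting the closeness of $\sqrt{ab}$ to an integer into the claimed approximation of $ab$ by a square. Since the hypotheses of the corollary coincide with those of Theorem~\ref{thm1} (with $\Delta=N^{-\delta}$), the theorem supplies
\begin{equation*}
H(\mathscr{A};k)\geq C(\eta,\beta,\delta)\frac{\Delta|\mathfrak{A}||\mathfrak{B}|}{\log N}(1+o(1)).
\end{equation*}
Because $C(\eta,\beta,\delta)>0$ and $\Delta|\mathfrak{A}||\mathfrak{B}|\gg N^{\eta+\beta-\delta}\to\infty$, the right-hand side is strictly positive once $N$ is large. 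Hence the set $\{l\in\mathscr{A}:\Omega(l)\leq k\}$ is nonempty, so there is an integer $P_k:=l$ with at most $k$ prime factors (counted with multiplicity) together with $a\in\mathfrak{A}$, $b\in\mathfrak{B}$ satisfying $P_k=l(\sqrt{ab})$ and $\|\sqrt{ab}\|<\Delta=N^{-\delta}$.

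The next step is elementary algebra. By the definitions of $l(\cdot)$ and $\|\cdot\|$, the conditions $P_k=l(\sqrt{ab})$ and $\|\sqrt{ab}\|<N^{-\delta}$ together give $|\sqrt{ab}-P_k|<N^{-\delta}$. I would then factor $ab-P_k^2=(\sqrt{ab}-P_k)(\sqrt{ab}+P_k)$ and bound the two factors separately: the first is $O(N^{-\delta})$, while since $a,b\in\{N+1,\dots,2N\}$ we have $N<\sqrt{ab}<2N$, so $\sqrt{ab}+P_k\ll N$ and also $P_k\asymp N$. Multiplying yields $|ab-P_k^2|\ll N^{1-\delta}$; and because $P_k\asymp N$ forces $N^{1-\delta}\asymp P_k^{1-\delta}$ (the exponent lying in $(0,1)$ for $0<\delta<1/2$), this is precisely $ab=P_k^2+O(P_k^{1-\delta})$.

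The assertions about $k$ and the admissible range of $\delta$ are just a reformulation of the defining relation $k=\lfloor 2/((\eta+\beta)/2-2/3-2\delta/3)\rfloor$. Writing $k\leq 2/((\eta+\beta)/2-2/3-2\delta/3)<k+1$ and solving each inequality for $\delta$ (the denominator being positive thanks to $\eta+\beta\geq 4(1+\delta)/3+\varepsilon$) produces exactly $3(\eta+\beta)/4-1-3/k\leq\delta<3(\eta+\beta)/4-1-3/(k+1)$. Specializing to $\eta,\beta\to 1$ gives $k=\lfloor 6/(1-2\delta)\rfloor$, which attains its minimum value $6$ exactly when $6/(1-2\delta)<7$, i.e.\ when $0<\delta<1/14$.

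All of the substantive work—the sieve and exponential-sum estimates—is carried by Theorem~\ref{thm1}, so the corollary contains no hard analytic step. The only point demanding any care is the positivity of the main term: I would make sure both that $C(\eta,\beta,\delta)>0$ and that $\Delta|\mathfrak{A}||\mathfrak{B}|/\log N$ genuinely tends to infinity, so that the stated lower bound forces the relevant set to be nonempty rather than reducing to a vacuous estimate. Once nonemptiness is secured, the passage to $ab=P_k^2+O(P_k^{1-\delta})$ and the inversion of the floor formula are routine.
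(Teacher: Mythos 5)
Your proposal is correct and follows exactly the deduction the paper intends (the paper states the corollary as an immediate application of Theorem \ref{thm1} without writing out the details): positivity of the lower bound for $H(\mathscr{A};k)$ yields a nonempty set, the factorization $ab-P_k^2=(\sqrt{ab}-P_k)(\sqrt{ab}+P_k)$ with $P_k\asymp N$ gives the error term $O(P_k^{1-\delta})$, and inverting the floor defining $k$ gives the stated range of $\delta$. No discrepancies with the paper's argument.
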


\begin{remark}
In the proof of Theorem \ref{thm1}, we directly relate $H(\mathscr{A};k)$ to sifting function. Although the
 minimum value of $k$ can only be 6, we get (\ref{Pk}) holds for each $0<\delta<\frac{1}{2}$ with different $k,\eta,\beta$; We use the weighted sieve of Chapter 9 of Pan and Pan \cite{PP} in the proof of Theorem \ref{thm2} below. By this way, $(\ref{Pk})$ can be derived for $k=4,5$.
\end{remark}

\begin{theorem}\label{thm2}
Suppose that $\varepsilon>0$ is sufficiently small, $\mathfrak{A},\mathfrak{B}\subset\{N+1, \cdots, 2N\}$, $|\mathfrak{A}|\gg N^{1-\varepsilon}$, $|\mathfrak{B}|\gg N^{1-\varepsilon}$. Let $\Delta=N^{-\delta}$, $0<\delta<\frac{1}{2}$ and $k=4,5$.
  Then we have the following results:\\
(1) If $k=5$, $0<\delta<\frac{1}{10}$, we have
  \begin{equation*}
    H(\mathscr{A};5)\geq C(\delta,5)\frac{\Delta|\mathfrak{A}||\mathfrak{B}|}{\log N}(1+o(1)),
  \end{equation*}
    where $C(\delta,5)>0$ is a constant depends on $\delta$;\\
(2) If $k=4$, $0<\delta<\frac{121}{10000}$, we have
  \begin{equation*}
    H(\mathscr{A};4)\geq C(\delta,4)\frac{\Delta|\mathfrak{A}||\mathfrak{B}|}{\log N}(1+o(1)),
  \end{equation*}
 where $ C(\delta,4)>0.0023205$ is a constant depends on $\delta$.
\end{theorem}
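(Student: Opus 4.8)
The plan is to recast the lower bound for $H(\mathscr{A};k)$ as a weighted linear sieve problem and then feed it into the weighted sieve machinery of Chapter 9 of Pan and Pan \cite{PP}. First I would regard $\mathscr{A}$ as a multiset, attaching to each integer $l\asymp N$ the multiplicity
\[
a_l=\big|\big\{(a,b)\in\mathfrak{A}\times\mathfrak{B}:\ l=l\big(\sqrt{ab}\big),\ \big\|\sqrt{ab}\big\|<\Delta\big\}\big|,
\]
so that $\sum_l a_l=H(\mathfrak{A},\mathfrak{B};\Delta)$ and $H(\mathscr{A};k)=\sum_{\Omega(l)\le k}a_l$. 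Since $\big\|\sqrt{ab}\big\|<\Delta$ forces $ab=l^2+O(N^{1-\delta})$ with $l\asymp N$, a lower bound of the right order for this weighted count is exactly the assertion of the theorem, and it yields Corollary \ref{1.3}. The sieve sequence is $\mathcal{A}=(a_l)$, and for the divisor sums I would prove
\[
A_d:=\sum_{d\mid l}a_l=\frac{X}{d}+r_d,\qquad X=2\Delta\,|\mathfrak{A}||\mathfrak{B}|,
\]
with $g(d)=1/d$ multiplicative; since $\sum_{p<z}g(p)\log p\sim\log z$ by Mertens, this is a linear sieve problem of dimension $\kappa=1$ and main term $X\asymp N^{2-\delta}$.

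The decisive step is the level of distribution. The condition $d\mid l$ is equivalent to $\big\|\sqrt{ab}/d\big\|<\Delta/d$, so I would obtain the asymptotic for $A_d$ by the same exponential-sum argument that underlies Theorem \ref{thm3}, now carrying the denominator $d$: a Vaaler--Fej\'er expansion of the indicator of $\big\|\sqrt{ab}/d\big\|<\Delta/d$ produces the main term $X/d$ together with a remainder
\[
r_d\ll\sum_{1\le|h|\le H}|c_h|\,\Big|\sum_{a\in\mathfrak{A}}\sum_{b\in\mathfrak{B}}e\big(h\sqrt{ab}/d\big)\Big|,
\]
and the two-dimensional square-root exponential sums must be estimated uniformly in $d$ and $h$. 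Summing over $d\le D$ and demanding $\sum_{d\le D}|r_d|=o(X/\log N)$ yields the admissible level $D=N^{\alpha-\varepsilon}$ with
\[
\alpha=\frac{\eta+\beta}{2}-\frac{2}{3}-\frac{2\delta}{3},
\]
which under $|\mathfrak{A}|,|\mathfrak{B}|\gg N^{1-\varepsilon}$ becomes $\alpha=\tfrac13(1-2\delta)-\varepsilon$. This uniform control of the square-root exponential sums is where the real work lies and is the main obstacle; it is also the mechanism by which the $\delta$-dependence of $\alpha$ propagates into the thresholds on $\delta$.

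With the linear sieve data $(\kappa=1,\ g(d)=1/d,\ X,\ D=N^{\alpha-\varepsilon})$ in hand, I would invoke the weighted sieve of \cite{PP}, Chapter 9. Choosing sifting levels $z=D^{1/s}$ and $y=D^{1/\tau}$ and Richert-type logarithmic weights, one adjoins to the lower-bound linear sieve the correction penalizing prime factors in $[z,y)$; because $l\asymp N=D^{1/\alpha+o(1)}$ is a fixed power of the level, a positive main term in the resulting weighted sieve bound guarantees
\[
H(\mathscr{A};k)=\sum_{\Omega(l)\le k}a_l\ge C(\delta,k)\,\frac{\Delta|\mathfrak{A}||\mathfrak{B}|}{\log N}\big(1+o(1)\big).
\]
The remaining task is to optimize $s$ and $\tau$ against the linear sieve functions $F$ and $f$: requiring the weighted main term to stay positive forces $\alpha$ above an explicit threshold for each $k$, and unwinding $\alpha=\tfrac13(1-2\delta)$ gives $0<\delta<\tfrac1{10}$ for $k=5$ and $0<\delta<\tfrac{121}{10000}$ for $k=4$, while a careful evaluation of the surviving factor yields $C(\delta,4)>0.0023205$. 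This optimization is delicate numerically but routine in structure once the level of distribution is secured.
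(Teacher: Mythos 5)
Your proposal follows essentially the same route as the paper: both reduce the problem to a weighted linear sieve in the style of Chapter 9 of Pan and Pan, both obtain the level of distribution $D = N^{(\eta+\beta)/2-2/3-2\delta/3-\varepsilon}$ (i.e.\ $N^{(1-2\delta)/3-\varepsilon}$ here) by expanding the remainder $r(\mathscr{A},d)$ via a Vaaler-type Fourier expansion and estimating the resulting sums $\sum_h\sum_a\sum_b e(h\sqrt{ab}/d)$ with the Fouvry--Iwaniec double large sieve, and both finish by numerical evaluation of the sieve functions $F$ and $f$. The only cosmetic difference is that the paper uses the simple Kuhn-type weight $1-\tfrac12\sum_{N^{1/15}\le p<N^{1/k},\,p\mid l}1$ (splitting $W$ into $S(\mathscr{A},N^{1/15})-\tfrac12\sum_p S(\mathscr{A}_p,N^{1/15})$) rather than Richert's logarithmic weights, which does not change the structure of the argument.
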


\smallskip
\textbf{Notation.}
Throughout this paper,  $\varepsilon$ always denotes an arbitrary small positive constant, which may not be the same at different occurrences. The symbol $N$ always denotes a sufficiently large natural number. For any real number $t$, $\lfloor t\rfloor$ denotes its integer part, $\{t\}$ its fractional part, $\psi(t)=t-\lfloor t\rfloor-1/2$ and $\|t\|=\min\big\{\{t\}, 1-\{t\}\big\}$. As usual, $e(t)=e^{2\pi it}$. Let $p$, with or without subscripts, always denote a prime number. The notation $f(x)\ll g(x)$ means that $f(x)=O(g(x))$. If we have simultaneously $f(x)\ll g(x)$ and $g(x)\ll f(x)$, then we shall write $f(x)\asymp g(x)$. The symbol $P_k$ denotes an almost-prime with at most $k$ prime factors, counted by multiplicity. We write $|A|$ for the cardinality of set $A$. The function $\Omega(n)$ denotes the number of prime factors of $n$, counted according to multiplicity. Let $\mu(n)$ denote M\"obius' function. We write $\tau (n)$ to denote divisor function. The symbol $\mathbb{Z}$ denotes the set of all integers. For any set $G$ of real numbers, we denote by $\chi_G$ the characteristic function of $G$, which means
\begin{equation*}
 \chi_{G}(n)= \left\{
      \begin{array}{cll}
      1,  &&  n\in G,\\
      0,  &&  n\notin G.
      \end{array}
   \right.
\end{equation*}

\section{Some Lemmas}
In this section, we shall give some preliminary lemmas, which are necessary in the proof of the main result.
\begin{lemma}\label{lem1}
Suppose
\begin{equation*}
  \mathscr{B}_{\varphi\psi}(\mathscr{X},\mathscr{Y})=\sum_{r}\sum_{s}\varphi_{r}\psi_{s}e(x_ry_s)
\end{equation*}
where $\mathscr{X}=(x_r)$, $\mathscr{Y}=(y_s)$ are finite sequences of real numbers with $|x_r|\leq X$, $|y_s|\leq Y$, and $\varphi_r$, $\psi_s$ are complex numbers. Then, we have
\begin{equation*}
  |\mathscr{B}_{\varphi\psi}(\mathscr{X},\mathscr{Y})|^2\leq 20(1+XY)\mathscr{B}_{\varphi}(\mathscr{X},Y)\mathscr{B}_{\psi}(\mathscr{Y},X)
\end{equation*}
with
\begin{equation*}
  \mathscr{B}_{\varphi}(\mathscr{X},Y)=\sum_{|x_{r_1}-x_{r_2}|\leq Y^{-1}}|\varphi_{r_1}\varphi_{r_2}|
\end{equation*}
and $\mathscr{B}_{\psi}(\mathscr{Y},X)$ defined similarly.
\end{lemma}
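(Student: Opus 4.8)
This is the Bombieri--Iwaniec \emph{double large sieve}, and the plan is to prove it by a Fourier/Cauchy--Schwarz argument built on a single smooth majorant. First I would fix an even function $\varpi:\bR\to\bR$ with $\varpi\ge 0$, with $\varpi(t)\ge 1$ for $|t|\le 1$, and whose Fourier transform $\widehat{\varpi}$ is nonnegative and supported in $[-1,1]$; the rescaled Fej\'er-type kernel $\varpi(t)=(\pi/2)^{2}\big(\sin(\pi t/2)/(\pi t/2)\big)^{2}$ does the job, with $\kappa:=\int_{\bR}\varpi<\infty$. Writing $g(y)=\sum_r\varphi_r e(x_r y)$ and $h(x)=\sum_s\psi_s e(x y_s)$, so that $\mathscr{B}_{\varphi\psi}(\mathscr{X},\mathscr{Y})=\sum_s\psi_s g(y_s)=\sum_r\varphi_r h(x_r)$, the structural fact driving the proof is that $g$ is band-limited to $[-X,X]$ and $h$ to $[-Y,Y]$.

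The clean part is a pair of ``energy'' identities. Setting $V(t)=\varpi(t/Y)$ and $U(t)=\varpi(t/X)$, I would expand $|g|^{2}$ and use $\int e(\lambda t)V(t)\,dt=\widehat{V}(-\lambda)=Y\,\widehat{\varpi}(Y\lambda)$, which is nonnegative and vanishes unless $|\lambda|\le 1/Y$. This yields
\[
\int_{\bR}|g(y)|^{2}V(y)\,dy=\sum_{|x_r-x_{r'}|\le 1/Y}\varphi_r\overline{\varphi_{r'}}\,\widehat{V}(x_r-x_{r'})\le Y\kappa\,\mathscr{B}_{\varphi}(\mathscr{X},Y),
\]
and symmetrically $\int_{\bR}|h(x)|^{2}U(x)\,dx\le X\kappa\,\mathscr{B}_{\psi}(\mathscr{Y},X)$. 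Here the compact Fourier support is exactly what collapses each quadratic form onto close pairs and produces $\mathscr{B}_\varphi$ and $\mathscr{B}_\psi$ with the asymmetric scales $1/Y$ and $1/X$ (using $0\le\widehat V\le\widehat V(0)=Y\kappa$).

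It remains to transfer these bounds to $\mathscr{B}_{\varphi\psi}$ itself, and this sampling step is the heart of the matter. When $XY\le 1/2$ every pair of $x$'s and every pair of $y$'s is automatically close, so $\mathscr{B}_\varphi=(\sum_r|\varphi_r|)^{2}$, $\mathscr{B}_\psi=(\sum_s|\psi_s|)^{2}$, and the trivial estimate $|\mathscr{B}_{\varphi\psi}|\le\sum_r|\varphi_r|\sum_s|\psi_s|$ already gives the claim; this regime accounts for the ``$1$'' in $1+XY$. For $XY\gtrsim 1$ I would exploit band-limitedness of $g$: choosing $\Phi$ with $\widehat{\Phi}\equiv 1$ on $[-X,X]$ (so $g=g*\Phi$) and $\Phi$ rapidly decaying at scale $1/X$, one writes $\mathscr{B}_{\varphi\psi}=\int g(t)\sum_s\psi_s\Phi(y_s-t)\,dt$ and applies Cauchy--Schwarz with weight $V$:
\[
|\mathscr{B}_{\varphi\psi}|^{2}\le\Big(\int|g|^{2}V\Big)\Big(\int V(t)^{-1}\Big|\sum_s\psi_s\Phi(y_s-t)\Big|^{2}\,dt\Big).
\]
The first factor is controlled above; in the second, the near-orthogonality of the translates $\Phi(y_s-\cdot)$ localizes the quadratic form to pairs with $|y_s-y_{s'}|\lesssim 1/X$ and contributes a factor $\asymp\|\Phi\|_2^2\asymp X$, so it is $\ll X\,\mathscr{B}_\psi(\mathscr{Y},X)$. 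Combined with the $Y\kappa\,\mathscr{B}_\varphi$ from the first factor this gives $|\mathscr{B}_{\varphi\psi}|^{2}\ll XY\,\mathscr{B}_\varphi\mathscr{B}_\psi$, i.e. the ``$XY$'' part, reflecting the $\asymp XY$ degrees of freedom of a function band-limited to $[-X,X]$ sampled across $[-Y,Y]$.

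The main obstacle is precisely this last estimate. Because $g\notin L^{2}(\bR)$ one cannot invoke Plancherel directly, and because a reproducing kernel with $\widehat{\Phi}\equiv 1$ on an interval cannot be compactly supported, the localization to close pairs is only approximate: it must be recovered by dominating the rapidly decaying off-diagonal weight $\int V(t)^{-1}\Phi(y_s-t)\overline{\Phi(y_{s'}-t)}\,dt$ by the sharp close-pair count through a routine dyadic summation, while simultaneously controlling the quadratic growth of $V^{-1}$ against the decay of $\Phi$ at the outer scale. Pushing the absolute constants through both regimes is what pins down the value $20$; I would keep careful track of $\kappa$, $\|\Phi\|_2^2$ and the dyadic losses, optimizing the majorant $\varpi$ if necessary, to verify that $20(1+XY)$ suffices.
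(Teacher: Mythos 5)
The paper does not actually prove this lemma --- it is quoted from Fouvry--Iwaniec (Proposition~1), who in turn take it from Bombieri--Iwaniec's double large sieve --- so your argument has to stand entirely on its own. Its first half does: the choice of $\varpi$, the identity $\int|g|^2V=\sum_{r,r'}\varphi_r\overline{\varphi_{r'}}\widehat V(x_{r'}-x_r)\le Y\kappa\,\mathscr{B}_\varphi(\mathscr{X},Y)$ via positivity and compact support of $\widehat V$, and the trivial treatment of the regime $XY\le 1/2$ are all correct. The gap is in the transfer step, and as written it is fatal: your majorant $\varpi(t)=(\pi/2)^2\bigl(\sin(\pi t/2)/(\pi t/2)\bigr)^2$ vanishes to second order at every nonzero even integer, so $V(t)=\varpi(t/Y)$ has double zeros at $t=2kY$, $k\ne0$, and the second Cauchy--Schwarz factor $\int V(t)^{-1}\bigl|\sum_s\psi_s\Phi(y_s-t)\bigr|^2\,dt$ is $+\infty$: near $t_0=2kY$ the integrand behaves like $|\Theta(t_0)|^2\,c\,Y^2(t-t_0)^{-2}$ with $\Theta(t_0)=\sum_s\psi_s\Phi(y_s-t_0)$ nonzero in general, and when $1/X$ is comparable to $Y$ these zeros even sit where $\Phi(y_s-\cdot)$ carries most of its mass. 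Your own list of difficulties (``the quadratic growth of $V^{-1}$'') shows you are working with the envelope $\min(1,(t/Y)^{-2})$ of the Fej\'er kernel and have overlooked that the kernel oscillates all the way down to zero; an inequality of the form $|B|^2\le(\text{finite})\cdot\infty$ proves nothing.

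This is repairable, but not by ``optimizing the majorant'' at the end: you need a $\varpi$ that is simultaneously $\ge1$ on $[-1,1]$, has nonnegative compactly supported Fourier transform, \emph{and} is bounded below by a positive multiple of $(1+|t|)^{-2}$ on all of $\bR$ (e.g.\ a sum of two Fej\'er kernels with incommensurable zero sets), after which the dyadic bookkeeping you gesture at still requires the covering inequality relating $\sum_{|y_s-y_{s'}|\le K/X}|\psi_s\psi_{s'}|$ to $\mathscr{B}_\psi(\mathscr{Y},X)$, none of which is supplied. A second, non-cosmetic omission is the constant: $20(1+XY)$ is part of the statement, yet your sketch never tracks it, and after enlarging $\kappa$, paying $\|\Phi\|_2^2\ge2X$ (forced by $\widehat\Phi\equiv1$ on $[-X,X]$), and absorbing the off-diagonal and tail losses there is no indication the product stays below $20$. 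The classical proof behind the cited result reaches the clean constant precisely by never dividing by the majorant: it passes from the discrete sums to the counting functions $N_\varphi(t)=\sum_{|x_r-t|\le(2Y)^{-1}}|\varphi_r|$ and $N_\psi(u)$, for which $\int N_\varphi^2\le Y^{-1}\mathscr{B}_\varphi(\mathscr{X},Y)$ holds exactly, and transfers $\mathscr{B}_{\varphi\psi}$ to these by a Sobolev-type sampling inequality for the band-limited functions $g$ and $h$. Either follow that route in full, or cite the source as the paper does.
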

\begin{proof}
 See Proposition 1 of Fouvry and Iwaniec \cite{FI}.  $\hfill$
\end{proof}

\begin{lemma}\label{lem2}
Let $\alpha\beta\neq0$, $\Theta>0$, $M\geq1$, $N\geq1$, and $\mathscr{S}(M,N;\Theta)$ be the number of quadruples (m,$\tilde{m}$,n,$\tilde{n}$) such that
\begin{equation*}
 \left|\left(\frac{\tilde{m}}{m}\right)^{\alpha}-\left(\frac{\tilde{n}}{n}\right)^{\beta}\right|<\Theta,
\end{equation*}
with $M\leq m, \tilde{m}<2M$ and $N\leq n, \tilde{n}<2N$. Then, we have
\begin{equation*}
  \mathscr{S}(M,N;\Theta)\ll MN\log2MN+\Theta M^2N^2.
\end{equation*}
\end{lemma}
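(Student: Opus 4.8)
The plan is to exploit the multiplicative structure of the two ratios by passing to lowest terms, which is precisely where the saving over the trivial bound $\ll MN^2$ comes from. First I would dispose of the trivial range: if $\Theta$ exceeds a fixed constant (depending on $\alpha,\beta$) then $\Theta M^2N^2$ already dominates the total number $\ll M^2N^2$ of quadruples, so I may assume $\Theta$ is small. Writing $\tilde m/m=a_1/b_1$ and $\tilde n/n=a_2/b_2$ in lowest terms, each value $a_1/b_1$ is produced by $\ll M/\max(a_1,b_1)$ pairs $(m,\tilde m)$ with $M\le m,\tilde m<2M$, because the common factor $d$ with $m=db_1,\ \tilde m=da_1$ runs over an interval of length $\ll M/\max(a_1,b_1)$; likewise $a_2/b_2$ comes from $\ll N/\max(a_2,b_2)$ pairs. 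This converts $\mathscr S(M,N;\Theta)$ into the weighted sum
\begin{equation*}
\mathscr S(M,N;\Theta)\ll MN\sum_{u,v}\frac{1}{P(u)\,Q(v)},
\end{equation*}
where $u=a_1/b_1$, $v=a_2/b_2$ run over reduced fractions in $(1/2,2)$ with $P(u)=\max(a_1,b_1)\le 2M$ and $Q(v)=\max(a_2,b_2)\le 2N$, subject to $|u^{\alpha}-v^{\beta}|<\Theta$.

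Next I would record the geometry of the constraint. On $(1/2,2)$ the maps $t\mapsto t^{\alpha}$ and $t\mapsto t^{\beta}$ have derivatives of size $\asymp 1$, so $\{(u,v):|u^{\alpha}-v^{\beta}|<\Theta\}$ lies in a band of width $\asymp\Theta$ about a smooth curve $v=\phi(u)$ with $\phi'\asymp 1$ (all constants depending only on $\alpha,\beta$). I would then split the $u$- and $v$-sums dyadically by $P(u)\sim P_0\le 2M$ and $Q(v)\sim Q_0\le 2N$. For fixed scales there are $\ll P_0^2$ admissible $u$ and $\ll Q_0^2$ admissible $v$, and since any two reduced fractions with denominators $\le 2P_0$ differ by $\gg P_0^{-2}$, the $u$'s are $\gg P_0^{-2}$-separated and the $v$'s are $\gg Q_0^{-2}$-separated. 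Counting the near-coincidences inside the band in the more economical direction — for each point of the sparser set, count the points of the denser set lying in an interval of length $\asymp\Theta$ — gives
\begin{equation*}
\#\{(u,v)\ \text{in the band}:P(u)\sim P_0,\ Q(v)\sim Q_0\}\ll \min(P_0,Q_0)^2+\Theta\,P_0^2Q_0^2,
\end{equation*}
the first term being the diagonal (both fractions essentially forced to coincide) and the second the generic area term.

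Finally I would sum over the dyadic scales. The area term contributes $\ll \Theta MN\sum_{P_0\le 2M}P_0\sum_{Q_0\le 2N}Q_0\ll \Theta M^2N^2$, while the diagonal term contributes $\ll MN\sum_{P_0,Q_0}\min(P_0,Q_0)^2/(P_0Q_0)$; splitting into $Q_0\le P_0$ and $Q_0>P_0$ and evaluating $\sum_{P_0\le 2M}P_0^{-1}\min(P_0,2N)\ll\log 2MN$ yields $\ll MN\log 2MN$. Together these give the claimed bound. The main obstacle is the second step: one must count the near-coincidences sharply, using the Farey separation of each set to bound the diagonal and choosing the favourable counting direction. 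A naive "fix three variables and count the fourth" argument only controls the diagonal by $\ll\max(P_0,Q_0)^2$ instead of $\min(P_0,Q_0)^2$, and thereby loses a full power, producing $M^2N$ or $MN^2$ in place of $MN\log 2MN$. Care is also needed to keep the width and slope of the band, and hence the counting constants, uniform on $(1/2,2)$, which is exactly where the hypothesis $\alpha\beta\neq0$ enters.
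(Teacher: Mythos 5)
Your argument is correct and is essentially the proof of Lemma 1 in Fouvry--Iwaniec \cite{FI}, which is all the paper itself offers here (a citation): passing to reduced fractions, exploiting the Farey spacing on \emph{both} sides, and counting in the sparser direction at each pair of dyadic scales is exactly what produces the diagonal term $MN\log 2MN$ alongside the area term $\Theta M^2N^2$. The points you single out --- uniformity of the band of width $O(\Theta)$ on $(1/2,2)$ coming from $\alpha\beta\neq 0$, and getting $\min(P_0,Q_0)^2$ rather than $\max(P_0,Q_0)^2$ for the near-diagonal count --- are precisely where a cruder ``fix three, count the fourth'' argument would lose a power, and you handle them correctly.
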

\begin{proof}
 See Lemma 1 of Fouvry and Iwaniec \cite{FI}.  $\hfill$
\end{proof}

\begin{lemma}\label{lem2'}
Let $\mathfrak{B}\subset\{N+1,\cdots,2N\}$, $X\geq 1$ and $\mathscr{Q}(\mathfrak{B};X)$ be the number of two-tuples $(b,b_1)$ such that
\begin{equation*}
 \left|\sqrt{b}-\sqrt{b_1}\right|<(2X)^{-1},
\end{equation*}
with $ b, b_1\in \mathfrak{B}$. Then, we have
\begin{equation*}
  \mathscr{Q}(\mathfrak{B};X)\leq (1+2\sqrt{2N}X^{-1})|\mathfrak{B}|.
\end{equation*}
\end{lemma}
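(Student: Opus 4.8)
The plan is to bound $\mathscr{Q}(\mathfrak{B};X)$ by fixing the first coordinate $b\in\mathfrak{B}$ and counting, for each such $b$, the number of admissible $b_1$. Since the constraint $\big|\sqrt{b}-\sqrt{b_1}\big|<(2X)^{-1}$ only becomes more permissive when we drop the restriction $b_1\in\mathfrak{B}$ and simply ask that $b_1$ be an integer, it suffices to count integers $b_1$ whose square root lies in the short interval $\big(\sqrt{b}-(2X)^{-1},\,\sqrt{b}+(2X)^{-1}\big)$, and then to sum the resulting bound over the $|\mathfrak{B}|$ choices of $b$.

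The key step is to translate the spacing condition on square roots into an interval condition on $b_1$ itself. Since $\sqrt{b}\geq\sqrt{N}\gg1$ and $X\geq1$, the quantity $\sqrt{b}-(2X)^{-1}$ is positive, so squaring gives the equivalence
\begin{equation*}
\big|\sqrt{b}-\sqrt{b_1}\big|<(2X)^{-1}\iff b_1\in\Big(\big(\sqrt{b}-\tfrac{1}{2X}\big)^2,\ \big(\sqrt{b}+\tfrac{1}{2X}\big)^2\Big).
\end{equation*}
The length of this interval is $\big(\sqrt{b}+\tfrac{1}{2X}\big)^2-\big(\sqrt{b}-\tfrac{1}{2X}\big)^2=\tfrac{2\sqrt{b}}{X}$, and since $b<2N$ this is at most $2\sqrt{2N}\,X^{-1}$. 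Equivalently, one may use the identity $\sqrt{b}-\sqrt{b_1}=\tfrac{b-b_1}{\sqrt{b}+\sqrt{b_1}}$ together with $\sqrt{b}+\sqrt{b_1}\leq 2\sqrt{2N}$ to obtain $|b-b_1|<\sqrt{2N}\,X^{-1}$, which confines $b_1$ to an interval of the same length.

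Finally, the number of integers contained in any interval of length $L$ is at most $L+1$; applying this with $L=2\sqrt{2N}\,X^{-1}$ bounds the number of admissible $b_1$ for each fixed $b$ by $1+2\sqrt{2N}\,X^{-1}$. Summing over $b\in\mathfrak{B}$ then yields
\begin{equation*}
\mathscr{Q}(\mathfrak{B};X)\leq\sum_{b\in\mathfrak{B}}\big(1+2\sqrt{2N}\,X^{-1}\big)=\big(1+2\sqrt{2N}\,X^{-1}\big)|\mathfrak{B}|,
\end{equation*}
as claimed. This argument is entirely elementary, so there is no genuine obstacle; the only point requiring care is to track the constant, so that the length bound $2\sqrt{2N}\,X^{-1}$ and the ``$+1$'' from counting integers in an interval combine to give exactly the stated factor $1+2\sqrt{2N}\,X^{-1}$ rather than something weaker.
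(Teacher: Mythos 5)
Your proof is correct: the equivalence $\big|\sqrt{b}-\sqrt{b_1}\big|<(2X)^{-1}\iff |b-b_1|<(\sqrt{b}+\sqrt{b_1})(2X)^{-1}$ together with $\sqrt{b}+\sqrt{b_1}\le 2\sqrt{2N}$ confines $b_1$ to an interval of length at most $2\sqrt{2N}X^{-1}$, and summing the resulting count of at most $1+2\sqrt{2N}X^{-1}$ integers over the $|\mathfrak{B}|$ choices of $b$ gives exactly the stated bound. The paper does not prove this lemma itself but cites Lemma 2 of Iwaniec and S\'ark\"ozy, and your elementary counting argument is precisely the standard one behind that reference.
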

\begin{proof}
 See Lemma 2 of Iwaniec and S$\acute{a}$rk\"ozy \cite{IS}.  $\hfill$
\end{proof}

\begin{lemma}\label{lem3}
 Suppose that $F(u)$ and $f(u)$ are continuous functions, which satisfy the following equation
 \begin{equation}\label{fF}
 \left\{
      \begin{array}{cll}
      F(u)=\frac{2e^{\gamma}}{u},\quad f(u)=0, &&   0< u\leq 2,\\
      (uF(u))'=f(u-1),\quad (uf(u))'=F(u-1), &&  u>2,
      \end{array}
   \right.
\end{equation}
where $\gamma$ is Euler's constant.
Then we have
\begin{equation*}
  F(u)=\frac{2e^{\gamma}}{u},\qquad 0< u\leq 3,
\end{equation*}
\begin{equation*}
  F(u)=\frac{2e^{\gamma}}{u}\left(1+\int_{2}^{u-1}\frac{\log(t-1)}{t}dt\right),\qquad 3\leq u\leq 5,
\end{equation*}
\begin{equation*}
  f(u)=\frac{2e^{\gamma}}{u}\left(\log (u-1)+\int_{3}^{u-1}\frac{1}{t}\left(\int_{2}^{t-1}\frac{\log(s-1)}{s}ds\right)dt\right),\qquad 4\leq u\leq 6.
\end{equation*}
 \end{lemma}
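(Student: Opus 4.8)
The plan is to integrate the delay-differential system \eqref{fF} step by step over the successive unit intervals $(2,3]$, $(3,4]$, $(4,5]$, $(5,6]$, at each stage substituting the values of $F$ and $f$ already determined on the interval one unit to the left as the (now explicit) right-hand side. Since each equation couples $F$ or $f$ at $u$ only to the partner function at the shifted argument $u-1$, on every such interval the system collapses to an elementary first-order equation of the form $(ug(u))' = (\text{known function of }u)$, which integrates at once; the constant of integration is then fixed by continuity of $uF(u)$ and $uf(u)$ at the left endpoint.

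First I would pin down $F$ on $(0,3]$. On $(0,2]$ it is the definition, and on $(2,3]$ the delayed argument satisfies $u-1\le 2$, so $f(u-1)=0$ and hence $(uF(u))'=0$. Thus $uF(u)$ is constant there, and matching at $u=2$ with $2F(2)=2e^{\gamma}$ gives $F(u)=2e^{\gamma}/u$ on all of $(0,3]$, the first assertion. Next I would compute $f$ on $(2,4]$, which is what drives the equation for $F$ one interval further on. For $u\in(2,3]$ and $u\in(3,4]$ the delayed argument $u-1$ lies in $(1,2]$ and $(2,3]$ respectively, and on both ranges $F(u-1)=2e^{\gamma}/(u-1)$ by the step just completed. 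Hence $(uf(u))'=2e^{\gamma}/(u-1)$ throughout $(2,4]$, and integrating from $u=2$, where $f(2)=0$, yields $f(u)=\frac{2e^{\gamma}}{u}\log(u-1)$ for $2<u\le 4$.

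Then I would obtain $F$ on $[3,5]$. Here $u-1\in[2,4]$, so I may insert $f(u-1)=\frac{2e^{\gamma}}{u-1}\log(u-2)$ from the previous step, giving $(uF(u))'=\frac{2e^{\gamma}}{u-1}\log(u-2)$. Integrating from $u=3$, with $3F(3)=2e^{\gamma}$, and applying the substitution $s=t-1$ to recast $\int_3^u \frac{\log(t-2)}{t-1}\,dt$ as $\int_2^{u-1}\frac{\log(s-1)}{s}\,ds$, produces the second assertion. Finally, for $f$ on $[4,6]$ the delayed argument $u-1\in[3,5]$, so I feed in the formula for $F$ just proved, integrate $(uf(u))'=F(u-1)$ from $u=4$ (using $4f(4)=2e^{\gamma}\log 3$), split off the elementary piece $\int \frac{dt}{t-1}$, and shift the variable of the surviving double integral by one unit to reach the third assertion.

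The computation carries no genuine obstacle; the points demanding care are purely organizational. One must track which unit interval the delayed argument $u-1$ falls into, so that the correct explicit expression for $F(u-1)$ or $f(u-1)$ is substituted, and check at each junction $u=2,3,4$ that continuity forces the constant of integration to vanish (equivalently, that the stated formulas agree with those on the preceding interval at the common endpoint). The one manipulation easy to botch is the change of variables converting my integrals into the exact shape $\int_2^{u-1}\frac{\log(s-1)}{s}\,ds$ and its iterate displayed in the statement, so I would carry those substitutions out explicitly rather than by inspection.
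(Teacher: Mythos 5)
Your step-by-step integration is correct: on each unit interval the delayed argument $u-1$ falls where the partner function is already known explicitly, the equation $(ug(u))'=(\text{known})$ integrates directly, and the constants are fixed by continuity at $u=2,3,4$; I checked the substitutions $s=t-1$ and $t'=t-1$ and they produce exactly the displayed formulas, including the cancellation of the $\log 3$ terms in the last step. The paper itself offers no proof, only a citation to Section 2 of Chapter 8 of Halberstam and Richert, and your argument is precisely the classical derivation given there, so there is nothing to fault.
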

\begin{proof}
See Section 2 of Chapter 8  of Halberstam and Richert \cite{HR}.  $\hfill$
\end{proof}

\begin{lemma}\label{lem3'}
Let $F(u)$ and $f(u)$ be continuous functions satisfying (\ref{fF}). Then the following are valid:\\
(1) $F(u)$ is a positive and strictly decreasing function, and satisfies
\begin{equation*}
F(u)=1+O(e^{-u}),
\end{equation*}
(2) $f(u)$ is a positive and strictly increasing function on $u\geq 2$, and satisfies
\begin{equation*}
f(u)=1+O(e^{-u}),
\end{equation*}
(3) $F(u)-f(u)>0$, $u\geq 1$.
 \end{lemma}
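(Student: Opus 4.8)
The plan is to decouple the system (\ref{fF}) into two independent delay-differential equations and then read off all four assertions from the signs and decay of the resulting functions. Introduce the companion functions $g(u)=F(u)-f(u)$ and $h(u)=F(u)+f(u)$; subtracting and adding the relations $(uF(u))'=f(u-1)$ and $(uf(u))'=F(u-1)$ gives $(ug(u))'=-g(u-1)$ and $(uh(u))'=h(u-1)$, both valid for $u>2$, with the common initial data $g(u)=h(u)=2e^{\gamma}/u$ on $(0,2]$. Equivalently, writing $p=F-1$ and $q=1-f$ (so that $p+q=g$ and $p-q=h-2$), one gets the symmetric pair $(up(u))'=-q(u-1)$ and $(uq(u))'=-p(u-1)$, with $p(u)=2e^{\gamma}/u-1$ and $q(u)=1$ on $(0,2]$. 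Everything claimed is then a statement about the signs and the size of $p$ and $q$.

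First I would establish the two asymptotic formulas, which is the analytic heart of the argument. Since the constant functions $F\equiv f\equiv 1$ solve the difference-differential relations (though not the initial conditions), one expects $p,q\to 0$; to make this quantitative I would pass to Laplace transforms, where the unit delay becomes a factor $e^{-s}$ and the decay rate is governed by the location of the zeros of the associated characteristic expression, yielding $F(u)=1+O(e^{-u})$ and $f(u)=1+O(e^{-u})$. The one technical nuisance, and the step I expect to be the main obstacle, is the non-integrable singularity $\sim 2e^{\gamma}/u$ of $F$ (hence of $g$ and $h$) at the origin: I would circumvent it by transforming $p$ and $q$ themselves, which are $O(1/u)$ near $0$, or by working on the half-line $u\ge 2$ and carrying the initial segment separately.

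Granting the exponential decay, the remaining assertions are formal. From $(up)'=-q(u-1)$ and $(uq)'=-p(u-1)$ one sees that, as long as $p$ and $q$ remain positive, the quantities $u\,p(u)$ and $u\,q(u)$ are strictly decreasing; since the decay rate forces $u\,p(u)\to 0$ and $u\,q(u)\to 0$, a sign analysis over the successive unit intervals $(n,n+1]$ with the positivity on $(0,2]$ as base case shows $p(u)>0$ and $q(u)>0$ throughout, that is $F(u)>1>f(u)$ for $u>2$. Monotonicity is then immediate: rewriting the defining relations as $F'(u)=\bigl(f(u-1)-F(u)\bigr)/u$ and $f'(u)=\bigl(F(u-1)-f(u)\bigr)/u$, the inequalities $f(u-1)<1<F(u)$ and $f(u)<1<F(u-1)$ give $F'(u)<0$ and $f'(u)>0$, while on $(0,2]$ one checks $F(u)=2e^{\gamma}/u$ decreasing and $f\equiv 0$ directly. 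Finally part (3) needs no separate work: for $u>2$ the inequality $F(u)>1>f(u)$ gives $F(u)-f(u)>0$, for $1\le u\le 2$ one has $F(u)-f(u)=2e^{\gamma}/u>0$, and in the transitional range the explicit formulas of Lemma \ref{lem3} (for instance $F(u)-f(u)=\tfrac{2e^{\gamma}}{u}\bigl(1-\log(u-1)\bigr)>0$ on $(2,3]$) confirm positivity concretely.
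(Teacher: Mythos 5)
The paper itself does not prove this lemma; it simply cites Chapter 8 of Halberstam and Richert, so the only meaningful comparison is with that standard treatment. Your algebraic reductions are all correct: the decoupling into $g=F-f$, $h=F+f$ with $(ug)'=-g(u-1)$, $(uh)'=h(u-1)$, the substitution $p=F-1$, $q=1-f$ with $(up)'=-q(u-1)$, $(uq)'=-p(u-1)$, and the derivation of the monotonicity statements and of part (3) from the inequalities $f(u)<1<F(u)$ are all sound. The Laplace-transform route to $F,f=1+O(e^{-u})$ is a legitimate plan (the true decay is even faster, of order $e^{-u\log u}$), though as you concede it is only a plan at this stage.

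The genuine gap is the positivity step $p(u)>0$ and $q(u)>0$ for all $u$, which everything else in your argument leans on. The proposed ``sign analysis over successive unit intervals'' does not close: knowing that $up$ and $uq$ are strictly decreasing \emph{while} the delayed arguments are positive only tells you they decrease from a positive value on the next unit interval, not that they stay positive there; and once one of them crosses zero the signs of the derivatives flip, so the functions can drift back toward $0$ from below without ever contradicting $up,uq\to 0$. The circularity is visible in the identity $up(u)=\int_{u-1}^{\infty}q(t)\,dt$ (valid once the decay is known): positivity of $p$ at one point requires positivity of $q$ over the entire future. Equivalently, $p,q>0$ amounts to the quantitative inequality $F-f>|F+f-2|$ between the two decoupled components, and the component $u(F+f-2)$, which obeys the anti-damped equation $(uD)'=D(u-1)$, genuinely changes sign on every unit interval (it is already negative at $u=2$, since $2e^{\gamma}-4<0$, and positive again near $u=2.8$); no purely qualitative sign bookkeeping can separate two superexponentially small oscillating quantities. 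The standard way to supply the missing fact, as in Halberstam--Richert, is either to construct $F$ and $f$ as the monotone, interlaced limits $f_n\uparrow f\le F\downarrow F_n$ of the defining iteration, or to use the adjoint delay equation and the associated conserved bilinear form; one of these (or an equivalent device) has to be imported before your monotonicity and part (3) arguments become valid for all $u$ rather than only on the initial ranges where the explicit formulas of Lemma \ref{lem3} apply.
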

\begin{proof}
See Section 2 of Chapter 8 of Halberstam and Richert \cite{HR}.  $\hfill$
\end{proof}

\begin{lemma}\label{lem4}
If $f$ is continuously differentiable, $f'$ is monotonic, and $||f'||\geq \lambda>0$ on $I$ then
\begin{equation*}
\sum_{n\in I}e(f(n))\ll \lambda^{-1}.
\end{equation*}
\end{lemma}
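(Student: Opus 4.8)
The plan is to prove this (the Kusmin--Landau inequality) by a combination of a normalization step, an exact telescoping identity, and Abel summation. First I would reduce to a convenient range for $f'$. Since $f'$ is continuous and monotonic on $I$ and $\|f'\|\geq\lambda>0$ throughout, $f'$ can never meet an integer (crossing one would force $\|f'\|=0$ somewhere), so there is a fixed integer $m$ with $m+\lambda\leq f'(x)\leq m+1-\lambda$ on $I$. Replacing $f(x)$ by $f(x)-mx$ leaves every summand $e(f(n))$ unchanged, because $e(mn)=1$ for integral $m$; hence I may assume $\lambda\leq f'(x)\leq 1-\lambda$ on $I$.

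Next I would linearize each term by a telescoping trick. Writing $I\cap\mathbb{Z}=\{a,a+1,\dots,b\}$ and using $e(f(n))=\big(e(f(n+1))-e(f(n))\big)\big/\big(e(f(n+1)-f(n))-1\big)$, the mean value theorem gives $f(n+1)-f(n)=f'(\xi_n)$ with $\xi_n\in(n,n+1)$, so $\theta_n:=f(n+1)-f(n)\in[\lambda,1-\lambda]$. Setting $\beta_n=\big(e(\theta_n)-1\big)^{-1}$, I would apply Abel summation to $\sum_{n=a}^{b-1}\beta_n\big(e(f(n+1))-e(f(n))\big)$, which expresses the sum as two boundary terms in $\beta_{b-1},\beta_a$ plus $\sum_n(\beta_{n-1}-\beta_n)e(f(n))$. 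Since $|e(f(n))|=1$, it then suffices to bound $|\beta_a|$, $|\beta_{b-1}|$, and $\sum_n|\beta_{n-1}-\beta_n|$.

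The decisive computation is the explicit form of $\beta$. From $e(\theta)-1=2i\sin(\pi\theta)\,e(\theta/2)$ one finds $\beta(\theta)=-\tfrac12-\tfrac{i}{2}\cot(\pi\theta)$, so $\beta$ has \emph{constant} real part $-1/2$ and $|\beta(\theta)|=\tfrac12\csc(\pi\theta)$. On $[\lambda,1-\lambda]$ we have $\sin(\pi\theta)\geq\sin(\pi\lambda)\geq 2\lambda$, giving $|\beta_a|,|\beta_{b-1}|\leq 1/(4\lambda)$. For the middle sum, the constancy of the real part yields $|\beta_{n-1}-\beta_n|=\tfrac12|\cot(\pi\theta_{n-1})-\cot(\pi\theta_n)|$, and here the monotonicity hypothesis enters: since $\xi_n$ increases with $n$ and $f'$ is monotonic, $\theta_n$ is monotonic in $n$, whence $\cot(\pi\theta_n)$ is monotonic and the sum telescopes to $\tfrac12|\cot(\pi\theta_a)-\cot(\pi\theta_{b-1})|\leq\cot(\pi\lambda)\leq 1/(\pi\lambda)$. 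Collecting the three contributions gives $\big|\sum_{n\in I}e(f(n))\big|\ll\lambda^{-1}$.

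The main obstacle is precisely the estimation of $\sum_n|\beta_{n-1}-\beta_n|$: a crude bound on each difference would lose a factor proportional to the length of $I$, which is fatal. The hypothesis that $f'$ is monotonic is exactly what rescues this, but it becomes usable only after the observation that $\mathrm{Re}\,\beta(\theta)$ is constant, so that the entire variation of $\beta_n$ is carried by the monotone quantity $\cot(\pi\theta_n)$ and the differences telescope rather than merely summing. Getting this algebraic identity right, and tracking the two boundary terms together with the normalization step carefully, are the points requiring the most care; everything else reduces to elementary inequalities such as $\sin(\pi\lambda)\geq 2\lambda$ and $\cot(\pi\lambda)\leq 1/(\pi\lambda)$, valid for $0<\lambda\leq 1/2$.
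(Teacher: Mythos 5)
Your proof is correct and complete. The paper itself gives no argument for this lemma --- it simply cites Theorem 2.1 of Graham and Kolesnik --- and your write-up is precisely the standard Kusmin--Landau proof found there: normalization to $\lambda\leq f'\leq 1-\lambda$, the partial-summation identity with $\beta(\theta)=(e(\theta)-1)^{-1}=-\tfrac12-\tfrac{i}{2}\cot(\pi\theta)$, and the telescoping of $\sum_n|\beta_{n-1}-\beta_n|$ via the constancy of $\operatorname{Re}\beta$ and the monotonicity of $\theta_n$; all the elementary inequalities you invoke ($\sin(\pi\lambda)\geq 2\lambda$, $\cot(\pi\lambda)\leq 1/(\pi\lambda)$ for $0<\lambda\leq 1/2$) are valid, and the only unmentioned detail is the single leftover boundary term $e(f(b))$, which is trivially $O(1)\ll\lambda^{-1}$.
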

\begin{proof}
 See Theorem 2.1 of Graham and Kolesnik \cite{GK}.  $\hfill$
\end{proof}

\begin{lemma}\label{lem5}
For any $H\geq 2$, we have
\begin{equation*}
  \psi(t)=\sum_{0<|h|\leq H}u(h)e(ht)+O\left(\sum_{|h|\leq H}v(h)e(ht)\right),
\end{equation*}
where
\begin{equation}\label{condition}
  u(h)\ll \frac{1}{|h|},\quad v(h)\ll\frac{1}{H},\quad \sum_{|h|\leq H}v(h)e(ht)\geq 0.
\end{equation}
\end{lemma}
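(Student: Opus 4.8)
The statement is the classical Vaaler-type approximation of the sawtooth function $\psi$ by a trigonometric polynomial of degree $H$, together with a nonnegative majorant for the remainder. The plan is to realize $\psi$ as the average of a Beurling--Selberg majorant and minorant of degree $H$, and then read off the three conditions in (\ref{condition}) from that construction. First I would record the (formal) Fourier expansion $\psi(t)=\sum_{h\neq 0}\frac{-1}{2\pi i h}e(ht)$, which represents $\psi$ away from the integers but fails at the jumps. This is exactly why a naive truncation, or even the Fej\'er mean $\sum_{0<|h|\le H}(1-\tfrac{|h|}{H+1})\frac{-1}{2\pi i h}e(ht)$, is insufficient: the Gibbs phenomenon leaves an $O(1)$ error near the integers, so the remainder cannot be controlled pointwise by a small nonnegative kernel without more care.

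To repair this I would invoke the Beurling extremal function $B(z)$, the entire function of exponential type $2\pi$ satisfying $B(x)\ge \operatorname{sgn}(x)$ for all real $x$ and $\int_{\mathbb{R}}(B(x)-\operatorname{sgn}(x))\,dx=1$, together with Selberg's derived one-sided trigonometric approximants. Following Vaaler, I would construct two real trigonometric polynomials $\psi_H^{-}$ and $\psi_H^{+}$ of degree at most $H$ with $\psi_H^{-}(t)\le \psi(t)\le \psi_H^{+}(t)$ for every real $t$, whose nonzero Fourier coefficients are $O(1/|h|)$, and — this is the decisive point — whose difference $\psi_H^{+}-\psi_H^{-}$ equals a normalized Fej\'er kernel $\frac{1}{H+1}\sum_{|h|\le H}(1-\tfrac{|h|}{H+1})e(ht)$, hence a nonnegative trigonometric polynomial of degree $H$ with coefficients $O(1/H)$.

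With these in hand I would set $u(h)$ to be the Fourier coefficients of $\tfrac12(\psi_H^{+}+\psi_H^{-})$ and $v(h)$ those of $\tfrac12(\psi_H^{+}-\psi_H^{-})$. The sandwiching $\psi_H^{-}\le\psi\le\psi_H^{+}$ then yields $|\psi(t)-\sum_{0<|h|\le H}u(h)e(ht)|\le \sum_{|h|\le H}v(h)e(ht)$, and all three requirements of (\ref{condition}) follow at once: the bound $u(h)\ll 1/|h|$ from the coefficients of the two approximants, the bound $v(h)\ll 1/H$ from the normalized Fej\'er coefficients, and the nonnegativity $\sum_{|h|\le H}v(h)e(ht)\ge 0$ from the positivity of the Fej\'er kernel.

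The routine verifications are the explicit Fourier coefficient estimates, which reduce to the known Fourier data of the Beurling--Selberg functions. The main obstacle is the construction of the one-sided approximants $\psi_H^{\pm}$ meeting the simultaneous demands that they pinch $\psi$ everywhere, have degree at most $H$, and differ by a nonnegative kernel; this is precisely the content of Vaaler's extremal-function theorem, and the nonnegativity of the remainder polynomial is the feature that elementary Fourier truncation cannot supply and that forces the extremal-function machinery. Alternatively, one may simply cite this expansion from the exposition in Graham and Kolesnik \cite{GK}.
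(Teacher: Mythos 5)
Your proposal is correct: the paper offers no proof of its own here, merely citing Lemma~1 of Rivat and S\'ark\"ozy, and that cited lemma is exactly the Vaaler-type approximation you describe, proved via the Beurling--Selberg extremal functions with the remainder majorized by a nonnegative Fej\'er-type kernel. Your reconstruction of the argument --- taking $u(h)$ from the average of the one-sided approximants and $v(h)$ from half their difference, so that the sandwiching gives the pointwise bound and the Fej\'er positivity gives $\sum_{|h|\leq H}v(h)e(ht)\geq 0$ --- is the standard and correct route, and your closing remark that one may simply cite the expansion is precisely what the paper does.
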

\begin{proof}
 See Lemma 1 of Rivat and S$\acute{a}$rk\"ozy \cite{RS}.  $\hfill$
\end{proof}

For given $z\geq 2$, we define the sifting function
\begin{equation}\label{sift}
  S(\mathscr{A},z)=\big|\{l\in\mathscr{A}: (l, P(z))=1\}\big|,
\end{equation}
where
\begin{equation*}
  P(z)=\prod_{\substack{p<z\\p\in\mathscr{P}}}p
\end{equation*}
and $\mathscr{P}$ is the set of all primes. For $d|P(z)$, we define
\begin{equation}\label{Ad}
\mathscr{A}_d=\{l\in\mathscr{A}: d|l\}.
\end{equation}
Assume that $|\mathscr{A}_d|$ can be written in the form
\begin{equation*}
|\mathscr{A}_d|=\frac{\omega(d)}{d}X+r(\mathscr{A},d),
\end{equation*}
where $X>1$ and $\omega(d)$ is a non-negative and multiplicative function, such that $\frac{\omega(d)}{d}X$ approximates $|\mathscr{A}_d|$ closely, that is, the error term $r(\mathscr{A},d)$ is small on average over d. Below we state the Lemmas of classical linear sieve method.

\begin{lemma}\label{sieve}
 Suppose
 \begin{equation}\label{V}
  V(z)=\prod_{p<z}\left(1-\frac{1}{p}\right)=\frac{e^{-\gamma}}{\log z}\left(1+O\left(\frac{1}{\log z}\right)\right),
\end{equation}
where $\gamma$ is Euler's constant. We have the conditions:\\
(1) For some suitable constant $A_1\geq 1$, we have
\begin{equation*}
 (\Omega_1):\qquad 0\leq \frac{\omega(p)}{p}\leq 1-\frac{1}{A_1}.
\end{equation*}
(2) Assume that $2\leq z\leq y$, the constant $A_2\geq 1$ and  $L\geq 1$ are independent of $z$ and $y$. We have
\begin{equation*}
(\Omega_2(1,L)): \qquad -L \leq \sum_{z\leq p<y}\frac{\omega(p)\log p}{p}-\log\left(\frac{y}{z}\right)\leq A_2.
\end{equation*}
(3) There exist constant $0<\alpha\leq 1$, $A_3\geq 1$ and $A_4\geq 1$ such that
\begin{equation*}
(R(1,\alpha)): \qquad \sum_{
d<X^{\alpha}/(\log X)^{A_3} \\ }\mu^2(d)3^{\nu(d)}|r(\mathscr{A}, d)|\leq A_4\frac{X}{\log^2 X}, \quad X\geq 2,
\end{equation*}
where $\nu(d)$ denotes the number of different prime factors of $d$.\\
If these three conditions hold, then for $z\leq X$ we have
\begin{equation}\label{upper}
S(\mathscr{A}, z)\leq XV(z)\left\{F\left(\alpha\frac{\log X}{\log z}\right)+C\frac{L}{(\log X)^{1/14}}\right\}
\end{equation}
and
\begin{equation}\label{lower}
S(\mathscr{A}, z)\geq XV(z)\left\{f\left(\alpha\frac{\log X}{\log z}\right)-C\frac{L}{(\log X)^{1/14}}\right\},
\end{equation}
where $C>0$ is an absolute constant, $F$ and $f$ are the functions of Lemma \ref{lem3}.
\end{lemma}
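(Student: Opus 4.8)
The plan is to prove this through the combinatorial linear sieve equipped with Rosser--Iwaniec weights. Starting from the Legendre--M\"obius identity
\[
S(\mathscr{A},z)=\sum_{d\mid P(z)}\mu(d)\,|\mathscr{A}_d|,
\]
I would replace the M\"obius function by the standard upper and lower bound sieve weights $\lambda^{\pm}_d$ of Rosser's type, supported on squarefree divisors $d\mid P(z)$ with $d<D$, normalized so that $|\lambda^{\pm}_d|\le 1$ and satisfying the pointwise sign conditions $\sum_{d\mid n}\lambda^{-}_d\le\sum_{d\mid n}\mu(d)\le\sum_{d\mid n}\lambda^{+}_d$ for every $n$. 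Inserting $|\mathscr{A}_d|=X\omega(d)/d+r(\mathscr{A},d)$ and choosing the level $D=X^{\alpha}/(\log X)^{A_3}$ gives
\[
S(\mathscr{A},z)\le X\sum_{\substack{d\mid P(z)\\ d<D}}\lambda^{+}_d\frac{\omega(d)}{d}+\sum_{\substack{d\mid P(z)\\ d<D}}\lambda^{+}_d\,r(\mathscr{A},d),
\]
together with the companion lower bound in which $\lambda^{+}$ is replaced by $\lambda^{-}$. The whole proof then splits into the evaluation of the main term and the control of the remainder.

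The remainder is the routine half. Since the $\lambda^{\pm}_d$ are bounded by $1$ and supported on squarefree $d<D$, and since the combinatorial bookkeeping of the Rosser weights introduces at most a factor $3^{\nu(d)}$, the error sum is dominated by $\sum_{d<D}\mu^2(d)3^{\nu(d)}|r(\mathscr{A},d)|$. This is precisely the quantity controlled by hypothesis $(R(1,\alpha))$, which bounds it by $A_4 X/\log^2 X$; because $V(z)\asymp 1/\log z$ by \eqref{V}, this is $o\bigl(XV(z)\bigr)$ and is absorbed into the stated error terms.

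The substantive step is the asymptotic evaluation of the main terms $X\sum_{d<D}\lambda^{\pm}_d\,\omega(d)/d$. Condition $(\Omega_1)$ makes the local densities admissible (so no prime is completely sifted), while $(\Omega_2(1,L))$ controls the average of $\omega(p)\log p/p$ and is what forces the one-dimensional (linear) character of the sieve. Combining these with Mertens-type summation, I would show that, writing $s=\alpha\log X/\log z$,
\[
X\sum_{\substack{d\mid P(z)\\ d<D}}\lambda^{+}_d\frac{\omega(d)}{d}=XV(z)\bigl(F(s)+o(1)\bigr),\qquad X\sum_{\substack{d\mid P(z)\\ d<D}}\lambda^{-}_d\frac{\omega(d)}{d}=XV(z)\bigl(f(s)-o(1)\bigr).
\]
The passage from the discrete weighted sums to the continuous functions $F$ and $f$ is effected by Buchstab's iteration, which converts the recurrence defining the Rosser weights into the coupled delay-differential system \eqref{fF} of Lemma \ref{lem3}; the sharp error term of size $L/(\log X)^{1/14}$ is exactly the quality produced by Iwaniec's optimal calibration of these weights. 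This iteration, and the verification that the resulting boundary data reproduce $F$ and $f$, is the genuine obstacle, and rather than reproduce the full analysis I would invoke the construction of Halberstam and Richert \cite{HR}, checking only that the hypotheses $(\Omega_1)$, $(\Omega_2(1,L))$, and $(R(1,\alpha))$ stated here coincide with those required there, so that \eqref{upper} and \eqref{lower} follow at once.
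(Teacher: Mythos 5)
The paper offers no argument for this lemma beyond a citation of Theorem 8.4 of Halberstam and Richert \cite{HR} (the Jurkat--Richert linear sieve), and your proposal, once the sketch is stripped away, ends in exactly the same place: you explicitly say you would ``invoke the construction of Halberstam and Richert, checking only that the hypotheses coincide.'' Logically, then, your proof and the paper's proof are the same citation. The one substantive caveat is that the method you sketch on the way there is not the method of the theorem you invoke. Theorem 8.4 of \cite{HR} is proved by taking Selberg's $\Lambda^2$ upper-bound sieve as the initial input and iterating Buchstab's identity; it is precisely that construction which produces the $3^{\nu(d)}$ factor in the remainder condition $(R(1,\alpha))$ and the particular error term $C L/(\log X)^{1/14}$ appearing in \eqref{upper} and \eqref{lower}. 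The Rosser--Iwaniec combinatorial weights $\lambda^{\pm}_d$ you describe are a different and later construction: their remainder is a plain sum $\sum_{d<D}|r(\mathscr{A},d)|$ with no $3^{\nu(d)}$ bookkeeping factor, and their error term has a different shape, so crediting the exponent $1/14$ to ``Iwaniec's optimal calibration'' is a misattribution. This does not invalidate your argument, since the final step is a citation of the correct theorem under hypotheses that do match; but be aware that if you actually carried out the Rosser-weight computation you outline, you would be proving a variant of the lemma rather than the stated one, and the delay-differential analysis you defer to \cite{HR} would still have to be done in the form appropriate to whichever construction you commit to.
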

\begin{proof}
 See Theorem 8.4 of Halberstam and Richert \cite{HR}.  $\hfill$
\end{proof}

\begin{lemma}\label{sieve'}
Suppose that the conditions $(\Omega_1)$ and $(\Omega_2(1,L))$ hold. Then, for $2\leq z\leq \xi$, we have
\begin{equation}\label{upperp}
S(\mathscr{A}_p, z)\leq \frac{\omega(p)}{p}XV(z)\left\{F\left(\frac{\log\xi^2}{\log z}\right)+C'\frac{L}{(\log \xi)^{1/14}}\right\}+\sum_{\substack{n<\xi^2\\n|P(z)}}3^{\nu(n)}|r(\mathscr{A},pn)|
\end{equation}
and
\begin{equation}\label{lowerp}
S(\mathscr{A}_p, z)\geq \frac{\omega(p)}{p}XV(z)\left\{f\left(\frac{\log \xi^2}{\log z}\right)-C'\frac{L}{(\log \xi)^{1/14}}\right\}-\sum_{\substack{n<\xi^2\\n|P(z)}}3^{\nu(n)}|r(\mathscr{A},pn)|,
\end{equation}
where $C'>0$ is an absolute constant.
\end{lemma}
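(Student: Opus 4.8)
The plan is to recognise the subsequence $\mathscr{A}_p=\{l\in\mathscr{A}:p\mid l\}$ as a sieve sequence in its own right, with a reduced density parameter $X_p=\frac{\omega(p)}{p}X$, and then to feed it into the fundamental linear sieve — the general form underlying Lemma \ref{sieve} (Theorem 8.4 of Halberstam and Richert \cite{HR}) — but now with the level of distribution prescribed to be $\xi^2$ and the remainder kept explicit, rather than absorbed into a condition of type $(R(1,\alpha))$. Since \eqref{upperp} and \eqref{lowerp} assume only the conditions $(\Omega_1)$ and $(\Omega_2(1,L))$, which are statements about $\omega$ alone, the whole point will be that these hypotheses transfer verbatim from $\mathscr{A}$ to $\mathscr{A}_p$.

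First I would carry out the reduction. In the relevant regime the sifting primes are $<z\le\xi\le p$, so the fixed prime $p$ is coprime to every squarefree $n\mid P(z)$. For such $n$ one has $(\mathscr{A}_p)_n=\{l\in\mathscr{A}:pn\mid l\}=\mathscr{A}_{pn}$, whence, by the multiplicativity of $\omega$,
$$\big|(\mathscr{A}_p)_n\big|=\big|\mathscr{A}_{pn}\big|=\frac{\omega(pn)}{pn}X+r(\mathscr{A},pn)=\frac{\omega(n)}{n}X_p+r(\mathscr{A},pn),\qquad X_p=\frac{\omega(p)}{p}X.$$
Thus $\mathscr{A}_p$ is a sieve sequence with density parameter $X_p$, the same multiplicative function $\omega$, and remainder $r(\mathscr{A}_p,n)=r(\mathscr{A},pn)$. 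Because $(\Omega_1)$ and $(\Omega_2(1,L))$ constrain only $\omega$, they continue to hold for $\mathscr{A}_p$ with the identical constants $A_1$, $A_2$ and the identical $L$; indeed the sieve of $\mathscr{A}_p$ only ever uses $\omega(p')$ for $p'<z$, each of which is unchanged.

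Next I would apply the linear sieve to $\mathscr{A}_p$ with level of distribution $D=\xi^2$. The upper bound of the linear sieve then reads
$$S(\mathscr{A}_p,z)\le X_pV(z)\left\{F\!\left(\frac{\log\xi^2}{\log z}\right)+C'\frac{L}{(\log\xi)^{1/14}}\right\}+\sum_{\substack{n<\xi^2\\ n\mid P(z)}}3^{\nu(n)}\big|r(\mathscr{A}_p,n)\big|,$$
and substituting $X_p=\frac{\omega(p)}{p}X$ together with $r(\mathscr{A}_p,n)=r(\mathscr{A},pn)$ yields exactly \eqref{upperp}; the companion lower bound of the linear sieve delivers \eqref{lowerp} in the same way. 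Here the scale of the error term is governed by the level of distribution: since $\log D=2\log\xi\asymp\log\xi$, the sieve error $\ll L(\log D)^{-1/14}$ becomes $C'L(\log\xi)^{-1/14}$, the absolute constant $C'$ absorbing the harmless factor $2^{1/14}$ along with the dependence on $A_1$, $A_2$.

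The main thing to watch is bookkeeping rather than analysis. The one genuine hypothesis to verify is the coprimality $(n,p)=1$, which is what makes the clean splitting $\omega(pn)/(pn)=(\omega(p)/p)(\omega(n)/n)$ legitimate; it holds precisely in the regime $p\ge z$ relevant to the applications, since otherwise $\mathscr{A}_{pn}$ would collapse to $\mathscr{A}_n$ for those $n$ with $p\mid n$. The second point is that, unlike in Lemma \ref{sieve}, the remainder must be left explicit, because in the applications \eqref{upperp} and \eqref{lowerp} are summed over $p$ and the quantity $\sum_{n<\xi^2}3^{\nu(n)}|r(\mathscr{A},pn)|$ can only be controlled after that summation. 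With these two observations in place, the analytic content is inherited wholesale from Theorem 8.4 of \cite{HR}.
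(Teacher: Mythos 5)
Your proposal is correct, and it is worth noting that the paper itself offers no argument here: Lemma \ref{sieve'} is quoted verbatim from Theorem 8.3 of Halberstam and Richert \cite{HR}, which is the Jurkat--Richert linear sieve stated from the outset for the subsequence $\mathscr{A}_q$ with explicit remainder. You instead derive the $\mathscr{A}_p$ case from the $q=1$ explicit-remainder linear sieve by re-normalizing: the identification $(\mathscr{A}_p)_n=\mathscr{A}_{pn}$, the new density parameter $X_p=\frac{\omega(p)}{p}X$, the observation that $(\Omega_1)$ and $(\Omega_2(1,L))$ depend only on $\omega$ and so transfer unchanged, and the relabelling $r(\mathscr{A}_p,n)=r(\mathscr{A},pn)$ are all exactly right, and this is the standard way to obtain the statement if one does not simply cite it. (In \cite{HR} the logical order is actually reversed --- Theorem 8.4 is deduced from Theorem 8.3 --- but your direction of derivation is equally valid.) Two small points of bookkeeping. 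First, you correctly isolate the one genuine hypothesis, namely $(p,P(z))=1$, without which $\omega(pn)/(pn)$ does not factor and $(\mathscr{A}_p)_n$ need not equal $\mathscr{A}_{pn}$; this hypothesis is not written into the lemma as stated, but it does hold in the paper's application, where $z=N^{1/15}\leq p$. Second, your passing claim that $\xi\leq p$ in the relevant regime is neither needed nor always true (in the application $\xi^2=X^{\alpha}/p$, so $\xi$ can exceed $p$ when $p$ is near $N^{1/15}$); all that matters is $z\leq p$, which is what you actually use.
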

\begin{proof}
 See Theorem 8.3 of Halberstam and Richert \cite{HR}.  $\hfill$
\end{proof}

\section{Proof of Theorem \ref{thm3}  and Estimate of $r(\mathscr{A},d)$}

We know that
 \begin{equation}\label{Re}
   \lfloor\sqrt{ab}+\Delta\rfloor-\lfloor\sqrt{ab}-\Delta\rfloor= \left\{
      \begin{array}{cll}
      1,  &&  \text{if}\,\,\big\|\sqrt{ab}\big\|<\Delta,\\
      0,  &&  \text{otherwise},
      \end{array}
   \right.
 \end{equation}
 in fact
 \begin{align*}
   \big\|\sqrt{ab}\big\|<\Delta & \iff \big|\sqrt{ab}-l\big|<\Delta \iff \sqrt{ab}-\Delta<l<\sqrt{ab}+\Delta\nonumber\\
    & \iff \lfloor\sqrt{ab}+\Delta\rfloor-\lfloor\sqrt{ab}-\Delta\rfloor=1.
 \end{align*}
Thus, recalling the definition (\ref{H1}), we have
\begin{align}\label{Hab}
  H(\mathfrak{A},\mathfrak{B};\Delta) & =\sum_{a\in\mathfrak{A}}\sum_{b\in\mathfrak{B}}\left(\lfloor\sqrt{ab}+\Delta\rfloor-\lfloor\sqrt{ab}-\Delta\rfloor\right) \nonumber\\
    & =2\Delta |\mathfrak{A}||\mathfrak{B}|+\sum_{a\in\mathfrak{A}}\sum_{b\in\mathfrak{B}}\left(\psi\big(\sqrt{ab}-\Delta\big)-\psi\big(\sqrt{ab}+\Delta\big)\right).
\end{align}
Using Lemma \ref{lem5}, we can get
\begin{align}\label{sumpsi}
  &\sum_{a\in\mathfrak{A}}\sum_{b\in\mathfrak{B}}  \psi\big(\sqrt{ab}\pm\Delta\big) \nonumber\\
  =& \sum_{a\in\mathfrak{A}}\sum_{b\in\mathfrak{B}}\sum_{0< |h|\leq H'}u(h)e(h(\sqrt{ab}\pm\Delta))+O\left(\sum_{a\in\mathfrak{A}}\sum_{b\in\mathfrak{B}}\sum_{|h|\leq H'}v(h)e(h(\sqrt{ab}\pm\Delta))\right).
\end{align}
Since (\ref{condition}), we can estimate the main term of (\ref{sumpsi}) is
\begin{equation}\label{MTpsi}
  \ll \sum_{0< |h|\leq H'}\frac{1}{|h|}\left|\sum_{a\in\mathfrak{A}}\sum_{b\in\mathfrak{B}}e(h(\sqrt{ab}\pm\Delta))\right|.
\end{equation}
We take the term $h=0$ outside of the summation, by (\ref{condition}), the error term of (\ref{sumpsi}) is
\begin{equation}\label{ETpsi}
  \ll \frac{|\mathfrak{A}||\mathfrak{B}|}{H'}+\frac{1}{H'}\sum_{0< |h|\leq H'}\left|\sum_{a\in\mathfrak{A}}\sum_{b\in\mathfrak{B}}e(h(\sqrt{ab}\pm\Delta))\right|.
\end{equation}
By (\ref{sumpsi})--(\ref{ETpsi}), we get
\begin{equation*}
  \sum_{a\in\mathfrak{A}}\sum_{b\in\mathfrak{B}}  \psi\big(\sqrt{ab}\pm\Delta\big)\ll \frac{|\mathfrak{A}||\mathfrak{B}|}{H'}+\sum_{0< |h|\leq H'}\frac{1}{|h|}\left|\sum_{a\in\mathfrak{A}}\sum_{b\in\mathfrak{B}}e(h(\sqrt{ab}\pm\Delta))\right|.
\end{equation*}
For the second term on the right hand side of above formula, we divide the summation ranges of $h$ into a dyadic interval with $h\sim H_0$, $H_0=2^{-j-1}H'$, $0\leq j\ll \log H'$, which gives
\begin{align}\label{sumpsi0}
&\sum_{a\in\mathfrak{A}}\sum_{b\in\mathfrak{B}}  \psi\big(\sqrt{ab}\pm\Delta\big) \nonumber\\
\ll & \frac{|\mathfrak{A}||\mathfrak{B}|}{H'}+\frac{\log H'}{H_0}\sum_{ h\sim H_0}\left|\sum_{a\in\mathfrak{A}}\sum_{b\in\mathfrak{B}}e(h\sqrt{ab})\right|\nonumber\\
  = & \frac{|\mathfrak{A}||\mathfrak{B}|}{H'}+\frac{\log H'}{H_0}\sum_{ h\sim H_0}g(h)\sum_{a\in\mathfrak{A}}\sum_{b\in\mathfrak{B}}e(h\sqrt{ab})
\end{align}
for some $1\ll H_0\ll H^{\prime}$. Here $(g(h))$ is a complex sequence with $|g(h)|\leq 1$.\\

Next, our aim is to estimate the sum
\begin{equation*}
  \mathscr{L}:=\sum_{ h\sim H_0}g(h)\sum_{a\in\mathfrak{A}}\sum_{b\in\mathfrak{B}}e(h\sqrt{ab})
  =\sum_{ h\sim H_0}g(h)\sum_{ a\sim N}\sum_{ b\sim N}\chi_{\mathfrak{A}}(a)\chi_{\mathfrak{B}}(b)e(h\sqrt{ab}).
\end{equation*}
 Applying Lemma \ref{lem1} to the sequences $\mathscr{X}_0= \left\{h\sqrt{a}\,|\,h\sim H_0,\, a\in \mathfrak{A} \right\}$ and  $\mathscr{Y}_0=\big\{\sqrt{b}\,|\, b\in \mathfrak{B}\big\}$, we get
\begin{equation}\label{L}
 \mathscr{L}^2\ll H_0N\mathscr{E}_1\mathscr{E}_2,
\end{equation}
where
\begin{equation*}
  \mathscr{E}_1=\sum_{\left|h_1\sqrt{a_1}-h_2\sqrt{a_2}\right|\leq \frac{1}{\sqrt{N}}}\left|g(h_1)g(h_2)\chi_{\mathfrak{A}}(a_1)\chi_{\mathfrak{A}}(a_2)\right|
\end{equation*}
and
\begin{equation*}
  \mathscr{E}_2=\sum_{\left|\sqrt{b_1}-\sqrt{b_2}\right|\leq \frac{1}{H_0\sqrt{N}}}\left|\chi_{\mathfrak{B}}(b_1)\chi_{\mathfrak{B}}(b_2)\right|.
\end{equation*}
We estimate $\mathscr{E}_1$ and $\mathscr{E}_2$ by Lemma \ref{lem2} and Lemma \ref{lem2'} respectively. Then, we can get
\begin{equation}\label{E1}
  \mathscr{E}_1 \ll \sum_{\left|\frac{h_1}{h_2}-\frac{\sqrt{a_2}}{\sqrt{a_1}}\right|\leq \frac{1}{H_0N}}1
               \ll H_0N\log N
\end{equation}
and
\begin{equation}\label{E2}
  \mathscr{E}_2  \ll(1+H_0^{-1})|\mathfrak{B}|\ll|\mathfrak{B}|.
\end{equation}
Combining (\ref{L})--(\ref{E2}), we have
\begin{equation}\label{L1}
  \mathscr{L}  \ll  H_0N|\mathfrak{B}|^{\frac{1}{2}}\log^{\frac{1}{2}}N.
\end{equation}
 Similarly, by the same process, if we assume the sequences $\mathscr{X}_0= \left\{h\sqrt{b}\,|\,h\sim H_0,\, b\in \mathfrak{B} \right\}$ and  $\mathscr{Y}_0=\big\{\sqrt{a}\,|\, a\in \mathfrak{A}\big\}$, we can find that
 \begin{equation}\label{L2}
  \mathscr{L}  \ll  H_0N|\mathfrak{A}|^{\frac{1}{2}}\log^{\frac{1}{2}}N.
\end{equation}
 Thus
 \begin{equation}\label{LL}
  \mathscr{L}  \ll  H_0N(|\mathfrak{A}||\mathfrak{B}|)^{\frac{1}{4}}\log^{\frac{1}{2}}N.
\end{equation}
 By (\ref{sumpsi0}) and (\ref{LL}), we get
 \begin{equation}\label{sumpsi2}
   \sum_{a\in\mathfrak{A}}\sum_{b\in\mathfrak{B}}  \psi\big(\sqrt{ab}\pm\Delta\big)\ll N(|\mathfrak{A}||\mathfrak{B}|)^{\frac{1}{4}}\log^{\frac{3}{2}}N,
 \end{equation}
 here we choose $H'=N^{-1}(|\mathfrak{A}||\mathfrak{B}|)^{\frac{3}{4}}$.
 Theorem \ref{thm3} is derived from (\ref{Hab}) and (\ref{sumpsi2}).

The result of Theorem \ref{thm3} shows that there exists integers $a\in\mathfrak{A},b\in\mathfrak{B}$ such that  $ab$ is `` near a square". Then, we aim to prove that there exists integers $a\in\mathfrak{A}$, $b\in\mathfrak{B}$ such that $ab$ is `` near an almost-prime square", which means we get a refinement of above result. Before proving the main result of this paper in Section 4, we intend to give a vital Lemma first, which is one of keys in making the classical linear sieve method available.


Recalling the definition  (\ref{A}), (\ref{Ad}) and (\ref{Re}), we have
\begin{align}\label{Ad1}
  |\mathscr{A}_d| & =\sum_{\substack{l\in\mathscr{A}\\d|l}}1 =\sum_{\substack{|\sqrt{ab}-l|<\Delta\\a\in\mathfrak{A},b\in\mathfrak{B}\\d|l}}1
                    =\sum_{a\in\mathfrak{A}}\sum_{b\in\mathfrak{B}}\left(\left\lfloor\frac{\sqrt{ab}+\Delta}{d}\right\rfloor-\left\lfloor\frac{\sqrt{ab}-\Delta}{d}\right\rfloor\right)\nonumber\\
                  & =\frac{2\Delta|\mathfrak{A}||\mathfrak{B}|}{d}+
                  \sum_{a\in\mathfrak{A}}\sum_{b\in\mathfrak{B}}\left(\psi\left(\frac{\sqrt{ab}-\Delta}{d}\right)-\psi\left(\frac{\sqrt{ab}+\Delta}{d}\right)\right)\nonumber\\
                  & := \frac{\omega(d)}{d}X+r(\mathscr{A},d),
\end{align}
where
\begin{equation}\label{defX}
  \omega(d)=1, \qquad X=2\Delta|\mathfrak{A}||\mathfrak{B}|,
\end{equation}
\begin{equation}\label{defr(A,d)}
    r(\mathscr{A},d)=\sum_{a\in\mathfrak{A}}\sum_{b\in\mathfrak{B}}\left(\psi\left(\frac{\sqrt{ab}-\Delta}{d}\right)-\psi\left(\frac{\sqrt{ab}+\Delta}{d}\right)\right).
\end{equation}
Next, we give the following result:

\begin{lemma}\label{lem0}
 We define $X$ and $r(\mathscr{A},d)$ by (\ref{defX}) and (\ref{defr(A,d)}). Suppose that $0<\delta<\frac{1}{2}$, $\Delta=N^{-\delta}$, $|\mathfrak{A}|\asymp N^{\eta}$, $|\mathfrak{B}|\asymp N^{\beta}$, $0<\eta\leq1$, $0<\beta\leq1$, $\eta+\beta\geq 4(1+\delta)/3+\varepsilon$ and $\alpha=\frac{(\eta+\beta)/2-2/3-2\delta/3}{\eta+\beta-\delta}-\varepsilon$. Then, for each $1\leq d\leq X^{\alpha}$, we have
  \begin{equation*}
     r(\mathscr{A},d)\ll \frac{XN^{-\varepsilon}}{d}.
  \end{equation*}
\end{lemma}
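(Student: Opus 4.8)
The plan is to follow the proof of Theorem~\ref{thm3} almost verbatim, carrying the divisor $d$ through every estimate. Beginning from \eqref{defr(A,d)}, I would apply Lemma~\ref{lem5} to each of $\psi\big((\sqrt{ab}\pm\Delta)/d\big)$ with a truncation level $H$ to be chosen. Since $e\big(h(\sqrt{ab}\pm\Delta)/d\big)=e(\pm h\Delta/d)\,e\big(h\sqrt{ab}/d\big)$ and the first factor is unimodular, it disappears inside absolute values, and the manipulation leading to \eqref{sumpsi0} gives
\begin{equation*}
r(\mathscr{A},d)\ll \frac{|\mathfrak{A}||\mathfrak{B}|}{H}+\frac{\log H}{H_0}\,|\mathscr{L}_d|,\qquad \mathscr{L}_d=\sum_{h\sim H_0}g(h)\sum_{a\in\mathfrak{A}}\sum_{b\in\mathfrak{B}}e\big(\tfrac{h}{d}\sqrt{ab}\big),
\end{equation*}
for some $1\ll H_0\ll H$ and some unimodular sequence $(g(h))$. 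Thus everything reduces to bounding $\mathscr{L}_d$, which is precisely the sum of Theorem~\ref{thm3} with the frequency $h\sqrt{ab}$ replaced by $(h/d)\sqrt{ab}$.

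Next I would feed $\mathscr{L}_d$ into Lemma~\ref{lem1} with $\mathscr{X}_0=\{h\sqrt{a}/d:h\sim H_0,\ a\in\mathfrak{A}\}$ and $\mathscr{Y}_0=\{\sqrt{b}:b\in\mathfrak{B}\}$, so that $X\asymp H_0N^{1/2}/d$, $Y\asymp N^{1/2}$ and $1+XY\asymp H_0N/d$. The two companion sums must be re-estimated with the extra $d$. For $\mathscr{B}_\varphi$ the closeness condition is $|h_1\sqrt{a_1}-h_2\sqrt{a_2}|\le dN^{-1/2}$, i.e. $|h_1/h_2-\sqrt{a_2}/\sqrt{a_1}|\le d/(H_0N)$, and Lemma~\ref{lem2} (with $M=H_0$, second parameter $N$, and $\Theta=d/(H_0N)$) yields $\mathscr{B}_\varphi\ll H_0N\log N+dH_0N\ll H_0N(\log N+d)$, the new summand $dH_0N$ arising from the $\Theta M^2N^2$ term. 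For $\mathscr{B}_\psi$ the condition becomes $|\sqrt{b_1}-\sqrt{b_2}|\le d/(H_0N^{1/2})$, whence Lemma~\ref{lem2'} gives $\mathscr{B}_\psi\ll(1+d/H_0)|\mathfrak{B}|$. Multiplying these and symmetrizing over $\mathfrak{A},\mathfrak{B}$ (as in the step from \eqref{L1} to \eqref{LL}) I obtain
\begin{equation*}
|\mathscr{L}_d|^2\ll (\log N+d)\,N^2(|\mathfrak{A}||\mathfrak{B}|)^{1/2}\big(\tfrac{H_0^2}{d}+H_0\big).
\end{equation*}

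Dividing by $H_0$ gives $|\mathscr{L}_d|/H_0\ll (\log N+d)^{1/2}N(|\mathfrak{A}||\mathfrak{B}|)^{1/4}(1/d+1/H_0)^{1/2}$, and since $d\ge1$ the last factor never exceeds $\sqrt2$; the worst dyadic block $H_0\asymp1$ therefore costs nothing, and altogether
\begin{equation*}
r(\mathscr{A},d)\ll \frac{|\mathfrak{A}||\mathfrak{B}|}{H}+(\log N+d)^{1/2}N(|\mathfrak{A}||\mathfrak{B}|)^{1/4}\log H.
\end{equation*}
I would choose $H\asymp dN^{\delta+\varepsilon}$, which keeps $\log H\ll\log N$ and makes the first term $\ll \Delta|\mathfrak{A}||\mathfrak{B}|N^{-\varepsilon}/d\ll XN^{-\varepsilon}/d$. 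Writing $|\mathfrak{A}||\mathfrak{B}|\asymp N^{\eta+\beta}$, $\Delta=N^{-\delta}$ and $X\asymp N^{\eta+\beta-\delta}$, the bound needed for the second term collapses, after discarding the logarithmic and $(\log N+d)^{1/2}$ factors, to the power inequality $d^{3/2}\ll N^{3(\eta+\beta)/4-1-\delta}$. This is exactly what $d\le X^{\alpha}$ encodes: with the stated $\alpha$ one has $X^{3\alpha/2}=N^{3(\eta+\beta)/4-1-\delta-\frac32\varepsilon(\eta+\beta-\delta)}$, and since $\eta+\beta-\delta\ge4/3$ under the hypotheses, the surplus factor $N^{-\frac32\varepsilon(\eta+\beta-\delta)}$ both absorbs all logarithms and leaves the genuine saving $N^{-\varepsilon}$.

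The reduction through Lemma~\ref{lem5} and the double large sieve is routine; the delicate points are the $d$-bookkeeping inside $\mathscr{B}_\varphi$ and $\mathscr{B}_\psi$ — in particular isolating the extra $dH_0N$ and checking that $(1/d+1/H_0)^{1/2}$ stays bounded, so that no loss is incurred at $H_0\asymp1$ — together with the closing exponent arithmetic. The hypotheses $\eta+\beta\ge4(1+\delta)/3+\varepsilon$ and $0<\delta<\tfrac12$ are spent precisely in forcing $\alpha>0$ (so that the range $1\le d\le X^{\alpha}$ is nontrivial) and $3(\eta+\beta)/4-1-\delta>0$ with room to spare; confirming this positivity and matching it to the definition of $\alpha$ is where I expect the main care to be needed.
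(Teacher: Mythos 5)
Your proposal follows the paper's own proof essentially step for step: the same reduction via Lemma \ref{lem5} followed by a dyadic decomposition in $h$, the same application of Lemma \ref{lem1} to the sequences $\{h\sqrt{a}/d\}$ and $\{\sqrt{b}\}$ with Lemma \ref{lem2} and Lemma \ref{lem2'} supplying the bounds $H_0N\log(2NH_0)+dH_0N$ and $(1+d/H_0)|\mathfrak{B}|$, the same symmetrization in $\mathfrak{A}$ and $\mathfrak{B}$ to reach $(|\mathfrak{A}||\mathfrak{B}|)^{1/4}$, and essentially the same choice $H\asymp d\Delta^{-1}N^{\varepsilon}$ with the closing constraint $d^{3/2}\ll N^{3(\eta+\beta)/4-1-\delta-\varepsilon}$ being exactly what $d\le X^{\alpha}$ encodes. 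The only difference is cosmetic bookkeeping: you bound $(1/d+1/H_0)^{1/2}\le\sqrt{2}$ and carry a factor $(\log N+d)^{1/2}$, whereas the paper keeps the factor $\bigl(1+d^{1/2}H_1^{-1/2}\bigr)$ and a power of $\log N$; both lead to the same final exponent arithmetic.
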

\begin{proof}
By Lemma \ref{lem5}, we obtain
\begin{align}
  \sum_{a\in\mathfrak{A}}\sum_{b\in\mathfrak{B}}\psi\left(\frac{\sqrt{ab}\pm\Delta}{d}\right) &= \sum_{a\in\mathfrak{A}}\sum_{b\in\mathfrak{B}}\sum_{0<|h|\leq H}u(h)e\left(\frac{h(\sqrt{ab}\pm\Delta)}{d}\right)\nonumber\\
 &\quad +O\left(\sum_{a\in\mathfrak{A}}\sum_{b\in\mathfrak{B}} \sum_{|h|\leq H}v(h)e\left(\frac{h(\sqrt{ab}\pm\Delta)}{d} \right)\right).
\end{align}
Using a similar argument of getting (\ref{sumpsi0}), we have
\begin{align}\label{sumpsi1}
  &\sum_{a\in\mathfrak{A}}\sum_{b\in\mathfrak{B}}  \psi\left(\frac{\sqrt{ab}\pm\Delta}{d}\right) \nonumber\\
    \ll & \frac{|\mathfrak{A}||\mathfrak{B}|}{H}+\frac{\log H}{H_1}\sum_{ h\sim H_1}c(h)\sum_{a\in\mathfrak{A}}\sum_{b\in\mathfrak{B}}e\left(\frac{h\sqrt{ab}}{d}\right),
  \end{align}
where $(c(h))$ is a complex sequence with $|c(h)|\leq 1$ and $H_1$ satisfies
\begin{equation}\label{H1}
H_1=2^{-j-1}H, \quad 0\leq j\leq J\ll \log H.
\end{equation}
It remains to estimate
\begin{equation*}
  \mathscr{M}:=\sum_{ h\sim H_1}c(h)\sum_{a\in\mathfrak{A}}\sum_{b\in\mathfrak{B}}e\left(\frac{h\sqrt{ab}}{d}\right)=\sum_{h\sim H_1}c(h) \sum_{a\sim N}\chi_{\mathfrak{A}}(a)\sum_{b\sim N}\chi_{\mathfrak{B}}(b)e\left(\frac{h\sqrt{ab}}{d}\right).
\end{equation*}
Applying Lemma \ref{lem1} to the sequences $\mathscr{X}_1= \left\{\frac{h\sqrt{a}}{d}\,|\,h\sim H_1,\, a\in \mathfrak{A} \right\}$ and  $\mathscr{Y}_1=\big\{\sqrt{b}\,|\, b\in \mathfrak{B}\big\}$, we get
\begin{equation}\label{T1'1}
  \mathscr{M}^{2}\ll \frac{NH_1}{d}\mathscr{D}_1\mathscr{D}_2,
\end{equation}
where
\begin{equation*}
  \mathscr{D}_1=\sum_{\left|h_1\sqrt{a_1}-h_2\sqrt{a_2}\right|\leq \frac{d}{\sqrt{N}}}|c(h_1)c(h_2)\chi_{\mathfrak{A}}(a_1)\chi_{\mathfrak{A}}(a_2)|,
\end{equation*}
and
\begin{equation*}
  \mathscr{D}_2=\sum_{\left|\sqrt{b_1}-\sqrt{b_2}\right|\leq \frac{d}{H_1\sqrt{N}}}|\chi_{\mathfrak{B}}(b_1)\chi_{\mathfrak{B}}(b_2)|.
\end{equation*}
Bounding $\mathscr{D}_1$ and $\mathscr{D}_2$ by Lemma \ref{lem2} and Lemma \ref{lem2'} respectively, we get
\begin{equation}\label{D1}
  \mathscr{D}_1 \ll \sum_{\left|\frac{h_1}{h_2}-\frac{\sqrt{a_2}}{\sqrt{a_1}}\right|\leq \frac{d}{NH_1}}1
               \ll NH_1\log 2NH_1+dNH_1
\end{equation}
and
\begin{equation}\label{D2}
  \mathscr{D}_2  \ll(1+dH_1^{-1})|\mathfrak{B}|.
\end{equation}
Combining (\ref{T1'1})--(\ref{D2}), we have
\begin{equation*}
  \mathscr{M}  \ll  NH_1|\mathfrak{B}|^{\frac{1}{2}}\left(1+d^{\frac{1}{2}}H_1^{-\frac{1}{2}}\right)\log^{\frac{1}{2}}2NH_1.
\end{equation*}
Similarly, by the same process, if we assume the sequences $\mathscr{X}_1= \left\{\frac{h\sqrt{b}}{d}\,|\,h\sim H_1,\, b\in \mathfrak{B} \right\}$ and  $\mathscr{Y}_1=\big\{\sqrt{a}\,|\, a\in \mathfrak{A}\big\}$, we can find that
\begin{equation*}
  \mathscr{M}  \ll  NH_1|\mathfrak{A}|^{\frac{1}{2}}\left(1+d^{\frac{1}{2}}H_1^{-\frac{1}{2}}\right)\log^{\frac{1}{2}}2NH_1.
\end{equation*}
Therefore, we get
\begin{equation}\label{T1'2}
  \mathscr{M}  \ll  NH_1(|\mathfrak{A}||\mathfrak{B}|)^{\frac{1}{4}}\left(1+d^{\frac{1}{2}}H_1^{-\frac{1}{2}}\right)\log^{\frac{1}{2}}2NH_1.
\end{equation}
Inserting (\ref{T1'2}) into (\ref{sumpsi1}), we conclude that
  \begin{align}\label{sumpsi2}
    &\sum_{a\in\mathfrak{A}}\sum_{b\in\mathfrak{B}}  \psi\left(\frac{\sqrt{ab}\pm\Delta}{d}\right) \nonumber\\
      \ll & |\mathfrak{A}||\mathfrak{B}|H^{-1}+N(|\mathfrak{A}||\mathfrak{B}|)^{\frac{1}{4}}\left(1+d^{\frac{1}{2}}H_1^{-\frac{1}{2}}\right)(\log N)^{\frac{3}{2}}.
    \end{align}
    Take $H=d\Delta^{-1}N^{2\varepsilon}$. Since $H_1$ satisfies (\ref{H1}), we can calculate
\begin{equation*}
  N(|\mathfrak{A}||\mathfrak{B}|)^{\frac{1}{4}}(\log N)^{\frac{3}{2}}\ll d^{-1}XN^{-\varepsilon},
\end{equation*}
and
\begin{equation*}
  N(|\mathfrak{A}||\mathfrak{B}|)^{\frac{1}{4}}d^{\frac{1}{2}}H_1^{-\frac{1}{2}}(\log N)^{\frac{3}{2}}\ll d^{-1}XN^{-\varepsilon},
\end{equation*}
for each $1\leq d\leq X^{\alpha}$, where $\alpha=\frac{(\eta+\beta)/2-2/3-2\delta/3}{\eta+\beta-\delta}-\varepsilon$.
Hence
\begin{equation}\label{FT1}
  \sum_{a\in\mathfrak{A}}\sum_{b\in\mathfrak{B}}  \psi\left(\frac{\sqrt{ab}\pm\Delta}{d}\right)\ll d^{-1}XN^{-\varepsilon}.
\end{equation}
By (\ref{defr(A,d)}) and (\ref{FT1}), we complete the proof of Lemma \ref{lem0}.
  $\hfill$
\end{proof}

\section{Proof of Theorem \ref{thm1}  }

We directly relate $H(\mathscr{A};k)$ to sifting function $S(\mathscr{A},z)$. Obviously, we can see that
\begin{equation}\label{HAk1}
  H(\mathscr{A};k)\geq \sum_{\substack{l\in\mathscr{A}\\(l,P((3N)^{1/(k+1)}))=1}}1=S(\mathscr{A},(3N)^{\frac{1}{k+1}}).
\end{equation}
For applying Lemma \ref{sieve} to get the lower bound of $S(\mathscr{A}, (3N)^{\frac{1}{k+1}})$, we have to check the condition $(\Omega_1)$, $(\Omega_2(1,L))$ and $(R(1,\alpha))$. Due to $\omega(d)=1$, the condition $(\Omega_1)$ and $(\Omega_2(1,L))$ hold. By Lemma \ref{lem0}, we obtain
\begin{equation}\label{R}
\sum_{d<X^{\alpha}}\mu^2(d)3^{\nu(d)}|r(\mathscr{A}, d)|
\ll  N^{-\varepsilon}X\sum_{d<X^{\alpha}} \frac{\mu^2(d)3^{\nu(d)}}{d}\ll N^{-\varepsilon}X(\log X)^3\ll XN^{-\frac{\varepsilon}{2}}.
\end{equation}
Hence, the condition $(R(1,\alpha))$ holds. Then, by (\ref{V}) and (\ref{lower}), we get
\begin{align}\label{SAv}
   S(\mathscr{A}, (3N)^{\frac{1}{v}})&\geq XV((3N)^{\frac{1}{v}})(f(\alpha v(\eta+\beta-\delta) )+o(1))\nonumber\\
   &\geq \frac{X}{\log (3N)}\cdot ve^{-\gamma} (f\left(\alpha v(\eta+\beta-\delta)\right)+o(1)),
\end{align}
where $v>0$. According to Lemma \ref{lem3'}, we can conclude that
\begin{equation*}
f(u)=0,\quad u\leq 2\,;\qquad f(u)>0,\quad u>2.
\end{equation*}
We take $v=k+1$. Since $k=\left\lfloor\frac{2}{(\eta+\beta)/2-2/3-2\delta/3}\right\rfloor$, the condition
\begin{equation*}
\alpha(k+1)(\eta+\beta-\delta)> 2
\end{equation*}
 holds, i.e. $f(\alpha(k+1)(\eta+\beta-\delta))>0$. Therefore
 \begin{equation}\label{SAk}
S(\mathscr{A},(3N)^{\frac{1}{k+1}})\geq C(\eta,\beta,\delta)\frac{X}{\log (3N)}(1+o(1)),
 \end{equation}
 where $C(\eta,\beta,\delta)>0$. Theorem \ref{thm1} is derived from (\ref{HAk1}) and (\ref{SAk}).

\section{Proof of Theorem \ref{thm2}}
Here, we use the weighted sieve of Pan and Pan \cite{PP}.
We consider the weighted sum
\begin{equation}\label{W0}
  W(\mathscr{A}, k, N^{\frac{1}{15}})=\sum_{\substack{l\in\mathscr{A}\\(l,P(N^{1/15}))=1}}\left(1-\frac{1}{2}\sum_{\substack{N^{1/15}\leq p< N^{1/k}\\p|l}}1\right).
\end{equation}
We write
\begin{equation*}
  \mathscr{W}_k(l)=1-\frac{1}{2}\sum_{\substack{N^{1/15}\leq p< N^{1/k}\\p|l}}1.
\end{equation*}
On the one hand, we have
\begin{equation*}
W(\mathscr{A},k,N^{\frac{1}{15}})=
\sum_{\substack{l\in\mathscr{A}\\(l,P(N^{1/15}))=1\\ \mu^2(l)\neq 0}}\mathscr{W}_k(l)+\sum_{\substack{l\in\mathscr{A}\\(l,P(N^{1/15}))=1\\ \mu^2(l)= 0}}\mathscr{W}_k(l),
\end{equation*}
where
\begin{align*}
\sum_{\substack{l\in\mathscr{A}\\(l,P(N^{1/15}))=1\\ \mu^2(l)= 0}}\mathscr{W}_k(l)& \ll\sum_{N^{1/15}<p_1\ll N^{1/2}} \sum_{\substack{l\in\mathscr{A}\\ p_1^2|l}}1=\sum_{N^{1/15}<p_1\ll N^{1/2}}\sum_{a\in\mathfrak{A}}\sum_{b\in\mathfrak{B}}\sum_{\substack{\sqrt{ab}-\Delta<l<\sqrt{ab}+\Delta\\ p_1^2|l}}1\\
& = \sum_{N^{1/15}<p_1\ll N^{1/2}}\sum_{N^2<m\leq 4N^2}\sum_{\substack{\sqrt{m}-\Delta<l<\sqrt{m}+\Delta\\ p_1^2|l}}\sum_{a\in\mathfrak{A}}\sum_{b\in\mathfrak{B}}\mathbbm{1}_{m=ab}\\
&\ll \sum_{N^{1/15}<p_1\ll N^{1/2}}\sum_{N^2<m\leq 4N^2}\tau(m)\sum_{\substack{\sqrt{m}-\Delta<l<\sqrt{m}+\Delta\\ p_1^2|l}}1\\
& \ll N^{\varepsilon} \sum_{N^{1/15}<p_1\ll N^{1/2}}\sum_{\substack{N-\Delta<l<2N+\Delta\\ p_1^2|l}}\sum_{(l-\Delta)^2<m<(l+\Delta)^2}1\\
&\ll N^{1-\delta+\varepsilon}\sum_{N^{1/15}<p_1\ll N^{1/2}}N/p_1^2\\
&\ll N^{2-\delta-1/15+\varepsilon}.
\end{align*}
 Hence
\begin{equation}\label{W1}
W(\mathscr{A},k,N^{\frac{1}{15}})=\sum_{\substack{l\in\mathscr{A}\\(l,P(N^{1/15}))=1}}\mu^2(l)\mathscr{W}_k(l)
+O(N^{2-\delta-\varepsilon}).
\end{equation}
On the other hand, we can write
\begin{equation*}
H(\mathscr{A};k)=\sum_{l\in \mathscr{A}}\chi^{(k)}(l),
\end{equation*}
where
\begin{equation*}
 \chi^{(k)}(l)= \left\{
      \begin{array}{cll}
      1,  &&  \Omega(l)\leq k,\\
      0,  &&  \Omega(l) > k.
      \end{array}
   \right.
\end{equation*}
It is easy to see that
\begin{equation}\label{HAk2}
H(\mathscr{A};k)\geq \sum_{\substack{l\in\mathscr{A}\\(l,P(N^{1/15}))=1}}\chi^{(k)}(l)
=\sum_{\substack{l\in\mathscr{A}\\(l,P(N^{1/15}))=1}}\mu^2(l)\chi^{(k)}(l)
+O\left(N^{2-\delta-\varepsilon}\right).
\end{equation}
For each $l\in \mathscr{A}$ under the conditions $\mu^2(l)=1$ and $(l,P(N^{\frac{1}{15}}))=1$, we have:\\
$(1)$ If $\Omega(l)\leq k$, then
\begin{equation*}
\chi^{(k)}(l)=1\geq 1-\frac{1}{2}\sum_{\substack{N^{1/15}\leq p< N^{1/k}\\p|l}}1 =\mathscr{W}_k(l);
\end{equation*}
$(2)$ If $\Omega(l)\geq k+1$, then
\begin{equation*}
\sum_{\substack{N^{1/15}\leq p< N^{1/k}\\p|l}}1\geq 2,
\end{equation*}
thus
\begin{equation*}
\chi^{(k)}(l)=0\geq 1-\frac{1}{2}\sum_{\substack{N^{1/15}\leq p< N^{1/k}\\p|l}}1 =\mathscr{W}_k(l).
\end{equation*}
Therefore, by (\ref{W1}) and (\ref{HAk2}), we can get
\begin{equation}\label{HAK21}
H(\mathscr{A};k)\geq W(\mathscr{A},k,N^{\frac{1}{15}})+O(N^{2-\delta-\varepsilon}).
\end{equation}
Recalling the definition (\ref{sift}) and (\ref{W0}), we have
\begin{align}\label{W2}
  W(\mathscr{A},k, N^{\frac{1}{15}})& =
  \sum_{\substack{l\in\mathscr{A}\\(l,P(N^{1/15}))=1}}1-
  \frac{1}{2}\sum_{\substack{l\in\mathscr{A}\\(l,P(N^{1/15}))=1}}\sum_{\substack{N^{1/15}\leq p< N^{1/k}\\p|l}}1\nonumber\\
  &=S(\mathscr{A}, N^{\frac{1}{15}})-\frac{1}{2}\sum_{N^{1/15}\leq p< N^{1/k}}S(\mathscr{A}_p, N^{\frac{1}{15}}).
\end{align}
By (\ref{SAv}) with $v=15$, $\eta=\beta=1-\varepsilon$ ( $\varepsilon>0$ is sufficiently small), we get
\begin{equation*}
   S(\mathscr{A}, N^{\frac{1}{15}})\geq \frac{X}{\log N}\cdot 15e^{-\gamma}f\left(5(1-2\delta)\right) (1+o(1)).
\end{equation*}
Noticing $4<5(1-2\delta)<6$ holds for $0<\delta<\frac{1}{10}$. By Lemma \ref{lem3}, we have
\begin{align}\label{S1}
  S(\mathscr{A},N^{\frac{1}{15}})&\geq \frac{X}{\log N}\cdot\frac{6}{ 1-2\delta}(1+o(1))\left(\log(4-10\delta)+\int_{3}^{4-10\delta}\frac{1}{t}\left(\int_{2}^{t-1}\frac{\log(s-1)}{s}ds\right)dt\right)\nonumber\\
  &= \frac{X}{\log N}\cdot\frac{6}{1-2\delta}(1+o(1))\left(\log(4-10\delta)+\int_{2}^{3-10\delta}\frac{\log(s-1)}{s}\log\left(\frac{4-10\delta}{s+1}\right)ds\right).
\end{align}
Applying (\ref{upperp}) to $S(\mathscr{A}_p,N^{\frac{1}{15}})$ with $\xi^2=X^{\alpha}/p$, we can get
\begin{equation*}
S(\mathscr{A}_p,N^{\frac{1}{15}})\leq \frac{X}{p}V(N^{\frac{1}{15}})\left(F\left(\frac{\log X^{\alpha}/p}{\log N^{1/15}}\right)+o(1)\right) +\sum_{\substack{n<X^{\alpha}/p\\n|P(N^{1/15})}}3^{\nu(n)}|r(\mathscr{A},pn)|,
\end{equation*}
where $N^{\frac{1}{15}}\leq p<N^{\frac{1}{k}}$.
Hence, by (\ref{V}) and (\ref{R}), we derive
\begin{align}\label{SAp1}
\sum_{N^{1/15}\leq p< N^{1/k}}S(\mathscr{A}_p, N^{\frac{1}{15}}) \leq & \frac{X}{\log N}\cdot15e^{-\gamma}(1+o(1))\sum_{N^{1/15}\leq p<N^{1/k}}\frac{1}{p}F\left(\frac{\log X^{\alpha}/p}{\log N^{1/15}}\right)\nonumber\\
&\quad +\sum_{N^{1/15}\leq p<N^{1/k}}\sum_{\substack{n<X^{\alpha}/p\\n|P(N^{1/15})}}3^{\nu(n)}|r(\mathscr{A},pn)|\nonumber\\
\leq & \frac{X}{\log N}\cdot 15e^{-\gamma}(1+o(1))\sum_{N^{1/15}\leq p<N^{1/k}}\frac{1}{p}F\left(\frac{\log X^{\alpha}/p}{\log N^{1/15}}\right)\nonumber\\
&\quad +\sum_{d<X^{\alpha}}\mu^2(d)3^{\nu(d)}|r(\mathscr{A},d)|\nonumber\\
\leq & \frac{X}{\log N}\cdot 15e^{-\gamma}(1+o(1))\int_{N^{1/15}}^{N^{1/k}}\frac{1}{u\log u}F\left(5-10\delta-15\frac{\log u}{\log N}\right)du\nonumber\\
&\quad+O(XN^{-\frac{\varepsilon}{2}}).
\end{align}
For $N^{\frac{1}{15}}\leq u<N^{\frac{1}{k}}$, we have
\begin{equation*}
0<5-10\delta-\frac{15}{k}<5-10\delta-15\frac{\log u}{\log N}\leq 4-10\delta<4,\qquad \text{if}\quad k=4 \,\,\text{or}\,\, 5,\quad 0<\delta<\frac{1}{10}.
\end{equation*}
Then, by Lemma \ref{lem3}, we can calculate
\begin{align}\label{intF}
&15e^{-\gamma}\int_{N^{1/15}}^{N^{1/k}}\frac{1}{u\log u}F\left(5-10\delta-15\frac{\log u}{\log N}\right)du\nonumber\\
= &  15e^{-\gamma}\int_{k}^{15}\frac{1}{u}F(5-10\delta-15/u)du
=15e^{-\gamma}\int_{5-10\delta-15/k}^{4-10\delta}\frac{1}{5-10\delta-t}F(t)dt\nonumber\\
=& 30\left(\int_{5-10\delta-15/k}^{4-10\delta}\frac{1}{t(5-10\delta-t)}dt
+\int_{3}^{4-10\delta}\frac{1}{t(5-10\delta-t)}\left(\int_{2}^{t-1}\frac{\log(s-1)}{s}ds\right)dt\right)\nonumber\\
=& 30\left(\frac{1}{5-10\delta}\log\left(\frac{4-10\delta}{5-10\delta-15/k}\cdot\frac{15}{k}\right)
+\int_{2}^{3-10\delta}\frac{\log(s-1)}{s}\left(\int_{s+1}^{4-10\delta}\frac{1}{t(5-10\delta-t)}dt\right)ds\right)\nonumber\\
=& \frac{6}{1-2\delta}\left(\log\left(\frac{4-10\delta}{5-10\delta-15/k}\cdot\frac{15}{k}\right)
+\int_{2}^{3-10\delta}\frac{\log(s-1)}{s}\log\left(\frac{(4-10\delta)(5-10\delta)}{s+1}-1\right)ds\right).
\end{align}
Combining (\ref{W2})--(\ref{intF}), we can find that
\begin{align}\label{W3}
  W(\mathscr{A},k,N^{\frac{1}{15}}) & \geq \frac{X}{\log N}\frac{6}{1-2\delta}(1+o(1))\left(\log(4-10\delta)+\int_{2}^{3-10\delta}\frac{\log(s-1)}{s}\log\left(\frac{4-10\delta}{s+1}\right)ds\right.\nonumber\\
  &\quad\left.-\frac{1}{2}\log\left(\frac{4-10\delta}{5-10\delta-15/k}\cdot\frac{15}{k}\right)-\frac{1}{2}\int_{2}^{3-10\delta}\frac{\log(s-1)}{s}\log\left(\frac{(4-10\delta)(5-10\delta)}{s+1}-1\right)ds\right)\nonumber\\
  &\quad+O(XN^{-\frac{\varepsilon}{2}})\nonumber\\
  &\geq C(\delta,k)\frac{X}{\log N}(1+o(1)),
\end{align}
where
\begin{align*}
C(\delta,k)&=\frac{6}{1-2\delta}\left(\log(4-10\delta)+\int_{2}^{3-10\delta}\frac{\log(s-1)}{s}\log\left(\frac{4-10\delta}{s+1}\right)ds-\frac{1}{2}\log\left(\frac{4-10\delta}{5-10\delta-15/k}\cdot\frac{15}{k}\right)\right.\\
  &\qquad\left.-\frac{1}{2}\int_{2}^{3-10\delta}\frac{\log(s-1)}{s}\log\left(\frac{(4-10\delta)(5-10\delta)}{s+1}-1\right)ds\right).
\end{align*}
Use $Mathematica$ for numerical computation,  we can calculate:\\
(1) If $k=5$, then
\begin{equation}\label{value}
\qquad C(\delta,5)>0,\qquad \text{for} \,\,0<\delta<\frac{1}{10};
\end{equation}
(2) If $k=4$, then
\begin{equation}\label{value2}
  C(\delta,4)>0.0023205,\qquad \text{for} \,\,0<\delta<\frac{121}{10000}.
\end{equation}
From (\ref{HAK21}), (\ref{W3})--(\ref{value2}), we get the result of Theorem \ref{thm2}.

\section*{Acknowledgement}

The authors would like to express the most sincere gratitude to the referee for his/her patience and time in
refereeing this paper. This work is  partially supported  by
the National Natural Science Foundations of China (Grant Nos. 12471009, 12301006, 11901566)
and  partially supported  by  Beijing Natural Science Foundation (Grant No. 1242003).

\end{document}